\documentclass[reqno]{amsart}
\usepackage{amssymb}
\usepackage{color}
\newcommand{\eoe}{~$\scriptscriptstyle\blacksquare$}

\usepackage[pagebackref]{hyperref}
\renewcommand*{\backrefalt}[4]{\ifcase #1 (\tt not cited)\or (\tt cited on page~#2)\else (\tt cited on pages~#2)\fi}

\theoremstyle{plain}
\newtheorem{theorem}{Theorem}[section]
\newtheorem{lemma}[theorem]{Lemma}
\newtheorem{proposition}[theorem]{Proposition}
\newtheorem{corollary}[theorem]{Corollary}
\newtheorem{remark}[theorem]{Remark}

\theoremstyle{definition}
\newtheorem{example}[theorem]{Example}
\newtheorem{question}[theorem]{Question}

\numberwithin{equation}{section}

\newcommand{\N}{\mathbb{N}}
\newcommand{\Z}{\mathbb{Z}}
\newcommand{\Q}{\mathbb{Q}}
\newcommand{\C}{\mathbb{C}}
\newcommand{\oQp}{\overline{\Q_p}}
\newcommand{\oZp}{\overline{\Z_p}}
\newcommand{\K}{\widehat{K}}
\newcommand{\aK}{\overline{K}}
\newcommand{\aV}{\overline{V}}
\newcommand{\oK}{\overline{\widehat{K}}}
\newcommand{\oV}{\overline{\widehat{V}}}
\newcommand{\ok}{\overline{k_v}}
\newcommand{\bK}{\mathbb{K}}          
\newcommand{\bO}{\mathbb{O}}         
\newcommand{\Gal}{\operatorname{Gal}}
\newcommand{\Aut}{\operatorname{Aut}}
\newcommand{\ratrk}{\operatorname{rat.rk}}
\usepackage[notcite, notref]{showkeys}

\newcommand{\br}{\text{br}}
\newcommand{\ur}{\text{ur}}

\usepackage{orcidlink}

\begin{document}

\title[Approximating DVRs]
{Approximating DVRs by elements of bounded ramification}

\author[G. W. Chang]{Gyu Whan Chang}
\address{Department of Mathematics Education, Incheon National University, Incheon 22012, Republic of Korea}
\email{whan@inu.ac.kr}

\author[G. Peruginelli]{Giulio Peruginelli\,\orcidlink{0000-0001-7694-8920} }
\address{Department of Mathematics ``Tullio Levi-Civita'' University of Padova, Via Trieste, 63 35121 Padova, Italy.} 
\email{gperugin@math.unipd.it}

\date{\today}

\subjclass[2020]{12J20, 13F30, 13A18, 13F05, 13J10}

\keywords{Discrete valuation domain, rational function field,
residually algebraic extension, torsion extension, perfect residue field, minimal pair, ramification index}

\begin{abstract}
Let $V$ be a DVR with quotient field $K$ and perfect residue field,  $v$ be the valuation on $K$ associated with $V$, $\widehat K$ be the completion of $K$, and $\mathbb{K}$ be the completion of an algebraic closure $\overline{\widehat{K}}$ of $\widehat K$. We show that a DVR of the rational function field $K(X)$ which is a  residually algebraic extension of $V$ is necessarily of the form $V_{\alpha}=\{\phi\in K(X)\mid v(\phi(\alpha))\geq0\}$,  for an element $\alpha$ of $\mathbb{K}$ transcendental over $K$, and that $\alpha$ is algebraic over $\widehat K$ if and only if the residue field extension is finite. Not every such $V_{\alpha}$ is a DVR, however, and we characterize the $\alpha \in\mathbb{K}$ for which $V_{\alpha}$ is a DVR: they are the elements which can be approximated by algebraic elements in $\overline{\widehat{K}}$ with bounded ramification indexes. Combining the two results, we obtain a complete description of the extensions of $V$ to $K(X)$ which are DVRs and residually algebraic over $V$, together with a criterion for each of the two cases to occur. The proofs rest on a bound for the ramification index in a compositum, valid with no tameness assumption and under a separability hypothesis on one residue field extension only; we show that the inequality cannot be improved to a divisibility and that this hypothesis cannot be dropped. We also show that the hypothesis of discreteness cannot be omitted. Furthermore, we show that the set of $\alpha\in\mathbb K$ for which $V_{\alpha}$ is a DVR is a subfield of $\mathbb K$,
which sits properly between $\overline{\widehat{K}}$ and $\mathbb K$, and corresponds to those elements $\alpha$  for which the value group of $\widehat K(\alpha)$ is discrete.
\end{abstract}

\maketitle

\section{Introduction}\label{sec:intro}

All rings considered in this paper are commutative with identity. Let $V$ be a
valuation domain with quotient field $K$ and $X$ be an indeterminate. An
\emph{extension of $V$ to the rational function field $K(X)$} is a valuation
domain $W$ of $K(X)$ with $W\cap K=V$; writing $(W,\mathfrak m)$ and
$(V,\mathfrak p)$, where $\mathfrak m$ and $\mathfrak p$ are the maximal ideals of $W$ and $V$
respectively, the extension is said to be \emph{residually algebraic} if
$W/\mathfrak m$ is algebraic over $V/\mathfrak p$ and \emph{residually
transcendental} otherwise, and it is said to be \emph{torsion} if the group
$\Gamma_w/\Gamma_v$ of its value groups is a torsion group. The extensions
that are residually algebraic and torsion are known in the literature as the
\emph{residually algebraic torsion} extensions of $V$ (see for example \cite{APZAll}).

We fix here the notation used throughout the paper, which we recall in detail in Section
\ref{sec:notation}: $V$ has rank one, $v$ is the valuation associated with $V$ and $\Gamma_v$ its value group,
$\K$ is the $v$-adic completion of $K$, $\oK$ is an algebraic closure of $\K$ and $\bK$ is the
completion of $\oK$; $\oV$ and $\bO$ are the valuation domains of $\oK$ and of $\bK$, and
$G=\Aut(\oK\mid\K)$.
The residually transcendental extensions have been characterized by Alexandru, Popescu and Zaharescu \cite{apz88} 
and described by means of \emph{minimal pairs} $(\alpha,\delta)$, with $\alpha\in\oK$ and $\delta$ 
in the divisible hull of $\Gamma_v$ \cite{apz90}; see also \cite{APZAll} for a description of all the valuations of $K(X)$ extending $v$. 
In the language of monomial valuations, which is the one we use
and that we will recall in section \ref{sec:monomial}, such an extension is equal to $\oV_{\alpha,\delta}\cap K(X)$, 
and the pair $(\alpha,\delta)$ corresponds to the closed ball $B(\alpha,\delta)=\{x\in\bK\mid v(x-\alpha)\geq\delta)\}$ of center $\alpha$ and radius $\delta$.
 The residually algebraic torsion extensions have been investigated by \"Oke \cite{oke13}, 
also in several indeterminates, and the completions of the associated valued fields of rational functions by Iovi\c{t}\u{a} and Zaharescu \cite{IZ95}, 
who identify them with the complete intermediate fields of $\bK\mid\K$ (an identification which, as we point out in Remark \ref{rem:IZ}, 
requires in one of its two directions a hypothesis that is not stated there); 
minimal pairs have since been recast in the language of key polynomials and of pseudo-convergent sequences, 
see, for instance, \cite{NS18} and \cite{Dut21}; all the extensions of $V$ to $K(X)$ have been described 
in terms of pseudo-monotone sequences, under the assumption that $\K$ is algebraically closed, by Peruginelli and Spirito \cite{PS21}, 
who also studied in \cite{PS20} the space of the valuation domains attached to pseudo-convergent sequences.

In this paper $V$ is a DVR (i.e., a discrete valuation domain of rank one) with perfect residue field, 
and we determine the DVRs of $K(X)$ which are residually algebraic extension of  $V$, 
proving that each such extension is of the form
$$V_{\alpha}=\{\phi\in K(X)\mid v(\phi(\alpha))\geq0\}$$
for an element $\alpha$ of $\bK$ transcendental over $K$, and $\alpha$ lies in
$\oK$ itself precisely when the residue field extension is finite
(Theorem \ref{thm:main}). Since every DVR of $K(X)$ lying over $V$ is
automatically a torsion extension of $V$, the class so described is exactly the
class of the discrete residually algebraic torsion extensions of $V$, 
that for short we call discrete residually algebraic extensions.
When $\alpha\in\oK$, these extensions are, in the language of the previous
paragraph, the limiting case $\delta=\infty$, in which the ball $B(\alpha,\delta)$ shrinks to a point; for
$\alpha\in\bK\setminus\oK$, which is the case the theorem is really about and in which the monomial
construction is not available.

Each of the two cases of Theorem \ref{thm:main} occurs, namely, 
either the extension of the residue fields is finite or it is infinite,
and one can say exactly when each of
them does: the first one corresponds to  the case $\alpha\in\oK$  
and arises precisely when $K$ is not complete, and the second case corresponds to
$\alpha\notin\oK$ and it occurs precisely when the residue field of $V$ admits finite extensions of
unbounded degree (Propositions \ref{rem:byproduct} and \ref{prop:nonvacuous}). Both
conditions can fail at once, and then the class is empty: this happens when $K$ is complete
and its residue field is algebraically closed or real closed (Corollary
\ref{cor:vacuous}). Both alternatives are illustrated in \S\ref{subsec:families} by two
families of examples, the power series rings $F[[t]]$ and the localizations
$F[t]_{(f)}$, the completion of the second being an instance of the first, so
that the two differ exactly by completeness; $\C[[t]]$ carries no such extension
at all, whereas $\C[t]_{(t)}$ carries infinitely many.

The proofs rest on a bound for the ramification index in a compositum (Lemma \ref{lem:ram}): 
if $F_1$ and $F_2$ are finite extensions of a complete discretely valued field $F$ 
and the residue field extension of $F_1\mid F$ is separable, then $e(F_1F_2\mid F_2)\leq e(F_1\mid F)$. 
This inequality is folklore and we claim no novelty for it; what matters here, and what we could not find recorded, 
is that it needs no tameness and  separability of one of the two residue field extensions only. Both features are sharp: 
for a tame $F_1\mid F$ one has the stronger divisibility of Abhyankar's lemma, which fails in the wild case (Example \ref{rem:wild}), 
and the separability hypothesis cannot be dropped (Example \ref{rem:sharp}). 
We say more about the provenance of the lemma in Section \ref{sec:ram}.
This is the only point at which the perfectness of the residue field of
$V$ enters the proof of Theorem \ref{thm:main}, perfectness being what guarantees
the separability hypothesis for all the extensions we meet; it is used again, on
its own account, in Proposition \ref{prop:nonvacuous}.
The lemma itself enters the argument at a single step: attached to a residually
algebraic torsion extension $W$ there is a sequence $\{(s_n,\delta_n)\}$ of minimal pairs
defining it, and the whole question is whether the sequence $\{s_n\}_{n\in\N}$ converges in $\bK$; the lemma bounds
the denominators of the $\delta_n$ when $W$ is discrete, and convergence follows.

Two further examples delimit the main result. First, not every $V_{\alpha}$,
for $\alpha\in\bK$, is a DVR: over $\Q_p$ we exhibit an $\alpha$ in the completion $\C_p$
of $\overline{\Q_p}$ for which the value group of $V_{\alpha}$ is non-discrete (Example \ref{ex:nondiscrete}),
so that the extensions described by Theorem \ref{thm:main} are exactly the valuation domains
$V_{\alpha}$ which happen to be a DVR. Secondly, and more to the point, the hypothesis
of discreteness cannot be omitted from Theorem \ref{thm:main}: if $V=\Z_p$, then
there is a residually algebraic torsion extension of $V$ to $K(X)$ which is not of the
form $V_{\alpha}$ for any $\alpha$ (Example \ref{prop:notValpha}). The
construction uses the fact that $\mathbb{C}_p$ is not spherically complete; as we
explain in Remark \ref{rem:IZ}, it also shows that the identification of the
completions of $K(X)$ given in \cite{IZ95} for all residually algebraic torsion
extensions requires a hypothesis which is not stated there, and that discreteness
is one such hypothesis.

Theorem \ref{thm:main} thus leaves open which $V_{\alpha}$ are DVRs. We answer
this in Proposition \ref{prop:criterion}: for
$\alpha\in\bK$ transcendental over $K$, the valuation domain $V_{\alpha}$ is
a DVR if and only if $\alpha$ is the limit of a Cauchy sequence $\{s_n\}_{n\in\N}\subset\oK$
with bounded set of ramification indices $\{e(\K(s_n)\mid\K)\}_{n\in\N}$.
Together with Theorem \ref{thm:main} this describes that class completely,
and we record the description as Corollary  \ref{cor:description},
the form in which the results of this paper are perhaps best read.
Finally, and most interestingly, we show in Theorem \ref{bounded ramification subfield}
that the set of $\alpha\in\bK$ for which $V_{\alpha}$ is a DVR
is a (proper) subfield of $\bK$, properly containing $\oK$. 
For the two families of \S\ref{subsec:families} this subfield can be named outright:
in equal characteristic zero it is the field of Puiseux series over $\ok$
(Example \ref{ex:puiseux}).

We close this introduction by saying plainly what is new in this paper and what is not. The case of a finite
residue field extension in Theorem \ref{thm:main} is \cite[Theorem 2.5]{PerTransc}, and the
compositum bound of Lemma \ref{lem:ram} seems to be folklore;
the contributions of the paper are the following five.
(i) \emph{Theorem \ref{thm:main} for an arbitrary DVR with perfect residue
field.} For $V=\Z_{(p)}$ this is \cite[Corollary 2.28]{p25}, and the passage to a general
base is not a formality: in equal characteristic $p$ the extension $\oK\mid\K$ need not be
separable, so that $G=\Aut(\oK\mid\K)$ is not a Galois group and the argument must be carried
by the two properties (G1) and (G2) of Section \ref{sec:notation} rather than by Galois
theory; the second bullet of Example \ref{ex:nonvacuous} exhibits exactly this situation.
This is the main point of the paper. (ii) \emph{Proposition \ref{prop:criterion}}, which
decides, for a given $\alpha$, whether $V_{\alpha}$ is a DVR, and so answers the question
that Theorem \ref{thm:main} leaves open; combined with the theorem it gives the complete
description of Corollary \ref{cor:description}. (iii)
\emph{Theorem \ref{bounded ramification subfield}}, that $\bK^{br}$
is a subfield of $\bK$, together with Theorem \ref{thm:union} and Example \ref{ex:puiseux}.  
(iv) \emph{Example \ref{prop:notValpha}}, which shows that the hypothesis of discreteness cannot simply be
dropped from Theorem \ref{thm:main}, and which is what allows us to locate, in Remark
\ref{rem:IZ}, a hypothesis missing from \cite{IZ95}. (v) \emph{Examples \ref{rem:wild} and
\ref{rem:sharp}}, which delimit Lemma \ref{lem:ram} sharply: its conclusion cannot be
strengthened to the divisibility of the tame Abhyankar lemma, and its separability hypothesis
cannot be removed.

\section{Notation and preliminaries}\label{sec:notation}

From now on $V$ is a DVR, $K$ its quotient field, $v$ the associated valuation,
$\pi$ a fixed generator of the maximal ideal of $V$ and $k_v=V/\pi V$ its residue
field; $X$ is an indeterminate over $K$. The letter $p$ is reserved throughout for a
rational prime and for the residue characteristic, and is never used for the uniformizer;
note that $\pi$ is a uniformizer of the completion $\widehat V$ as well as of $V$. By a DVR we mean a
valuation domain which is discrete of rank one, so that $\Gamma_v\cong\Z$; we use the two
descriptions interchangeably, for the extensions of $V$ considered below as well as for $V$
itself. The distinction is worth making, since Section \ref{sec:monomial} exhibits a discrete
valuation domain of rank two. We fix, once and for
all, the following
notation, every symbol keeping the meaning given here until the end of the
paper.

\begin{itemize}
\item $\Gamma_v$ is the value group of $v$ and
      $\Gamma_{\overline v}=\Gamma_v\otimes_{\Z}\Q\cong\Q$ its divisible hull.
\item $\K$ and $\widehat V$ are the $v$-adic completions of $K$ and of $V$;
      $\K$ is henselian (see for example \cite[Theorem 1, p. 98]{Rib}) and has the same residue field $k_v$ as $V$.
\item $\oK$ is an algebraic closure of $\K$ and $\oV$ is the integral closure of
      $\widehat V$ in $\oK$, a rank one non-discrete valuation domain. We denote
      again by $v$ the \emph{unique} extension of $v$ to $\oK$, and by $\ok$ the
      residue field of $\oV$, an algebraic closure of $k_v$.
\item $\bK$ and $\bO$ are the completions of $\oK$ and of $\oV$, respectively. Since a valued
      field of rank one and its completion have the same value group and the
      same residue field, $v(\bK^{\times})=\Gamma_{\overline v}$ and the residue
      field of $\bO$ is $\ok$. When $V=\Z_{(p)}$ one has $\bK=\mathbb{C}_p$.
\item $G=\Aut(\oK\mid\K)$; see the comment below in \S \ref{sec:group G}.
\item $e(F_1|F_2)$ is the ramification index of a valued field extension $F_1|F_2$,
namely, the index of the corresponding value groups,
where $F_1|F_2$ means the field extension of $F_2 \subseteq F_1$.
\end{itemize}

We work over $\oK$ rather than over an algebraic closure of $K$ because it is
only over $\oK$ that the extension of $v$ is unique, so that $v\circ\sigma=v$
for every $\sigma\in G$. Note also that every element of $\bK\setminus\oK$ is
transcendental over $\oK$.

\subsection{Roots of unity whose order is prime to the residue characteristic} \label{subsec:roots}
Assume $\operatorname{char}k_v=p>0$, let $m\geq1$ be prime to $p$ and let $\mu_m$ denote the
group of $m$-th roots of unity of $\oK$. Every $\zeta\in\mu_m$ is a unit, since
$m\,v(\zeta)=0$ forces $v(\zeta)=0$. Moreover the residue map is injective on $\mu_m$: if
$\zeta\in\mu_m$ has residue $1$ and $\zeta\neq1$, then $1+\zeta+\cdots+\zeta^{m-1}=0$,
whereas its residue is $m$, which is non-zero in $k_v$ because $p\nmid m$. Equivalently,
$$v(1-\zeta)=0\qquad\text{for every }\zeta\in\mu_m,\ \zeta\neq1 ,$$
and, the residue map being an injective homomorphism on $\mu_m$, the residue of a primitive
$m$-th root of unity is again a primitive $m$-th root of unity. We use this in Example
\ref{ex:nonvacuous} and in Example \ref{ex:nondiscrete}.

\subsection{The ramification index of an element of $\bK$}\label{sec:eps}

For $s\in\bK$, we put
$$e_s=e(\K(s)\mid \K),$$
which in general is an element of $\N\cup\{\infty\}$ and we call it the \emph{ramification index of $s$ over $\K$}. 
Clearly, $e_s\in\N$ whenever $s\in \oK$ because in that case the extension $\K(s)\mid\K$ is finite.
In general, the extension $\K(s)\mid \K$ is transcendental precisely when $s\not\in\oK$.
We will show that even in this case, $e_s$ might be finite, and we will characterize in \S \ref{sec:examples}
precisely when this is the case (see  Proposition \ref{prop:criterion}).
In the following we will make use of the following fact: $e_s$ is finite precisely
when the valuation domain $\bO\cap\K(s)$ of $\K(s)$ is a DVR.

\subsection{On the group $G$}\label{sec:group G}
Let $F\subseteq L$ be an extension of fields and let $\Aut(L\mid F)$ denote the
group of all automorphisms $\sigma$ of $L$ such that $\sigma(a)=a$ for all
$a\in F$. We say that $L$ is \emph{quasi-Galois} over $F$ if it is a normal (or
splitting) extension of $F$. The extension $\oK\mid\K$ is quasi-Galois, but in
general it is not Galois, so that $G$ is not a Galois group: if
$\operatorname{char}\K=p>0$ and $\K$ is not perfect, then $\oK\mid\K$ is not
separable, the restriction to $\K^{\mathrm{sep}}$, the separable closure of $\K$
in $\oK$, identifies $G$ with
$\Gal(\K^{\mathrm{sep}}\mid\K)$, and the fixed field of $G$ is the perfect
closure of $\K$ and not $\K$. This is immaterial for what follows, since only
the following two properties are used, and both of them hold for quasi-Galois
extensions.

\begin{itemize}
\item[(G1)] $G$ acts transitively on the set of the roots in $\oK$ of an
irreducible polynomial of $\K[X]$ and, more generally, the automorphism group of
a quasi-Galois algebraic extension acts transitively on the set of the
extensions of a valuation of the base field to it. We shall use the latter
property for the extensions of a valuation of $\K(X)$ to $\oK(X)$; note that
$\Aut(\oK(X)\mid\K(X))=G$, because $\oK$ is the algebraic closure of $\K$ in
$\oK(X)$, hence is stable under every automorphism fixing $\K(X)$, while
conversely every $\sigma\in G$ extends to $\oK(X)$ coefficientwise with
$\sigma(X)=X$.
\item[(G2)] $v\circ\sigma=v$ for every $\sigma\in G$, by the uniqueness of the
extension of $v$ to $\oK$. Consequently every $\sigma\in G$ is an isometry of
$\oK$ and extends by continuity to $\bK$.
\end{itemize}

\subsection{Monomial valuations}\label{sec:monomial}
Let $U$ be a rank one valuation domain with quotient field $F$ and valuation
$u$, let $\alpha\in F$ and let $\delta$ be an element of the divisible hull of
the value group of $u$. For $g\in F[X]$,
written as $$g(X)=a_n(X-\alpha)^n+\cdots+a_1(X-\alpha)+a_0$$ in $F[X]$, set
$$u_{\alpha,\delta}(g)=\min\{u(a_i)+i\delta\mid i=0,\ldots,n\}.$$
The function $u_{\alpha,\delta}$ extends to a valuation of $F(X)$
\cite[Ch.~VI, \S10, no.~1, Lemma~1]{Bourb}, called a \emph{monomial valuation}, of
rank one, and we denote by $U_{\alpha,\delta}$ the associated valuation domain;
clearly $U_{\alpha,\delta}\cap F=U$. We shall use this construction for $U=\oV$,
which is a rank one valuation domain as well.

The limiting case $\delta=\infty$ is of a different nature, and we treat it
separately. The formula above then reads $u_{\alpha,\infty}(g)=u(g(\alpha))$,
which takes the value $\infty$ at the non-zero polynomial $X-\alpha$ and is
therefore \emph{not} a valuation of $F(X)$. The set
$$U_{\alpha,\infty}=\{\phi\in F(X)\mid \phi \text{ is defined at }\alpha
\text{ and } \phi(\alpha)\in U\}$$
is nevertheless a valuation domain of $F(X)$ lying over $U$: it is the pullback
of $U$ under the residue map of the $(X-\alpha)$-adic valuation domain
$F[X]_{(X-\alpha)}$ of $F(X)$, whose residue field is $F$. Being the composite of
two valuation domains of rank one, $U_{\alpha,\infty}$ has rank two, and its
residue field is that of $U$ itself, so that it is trivially residually algebraic
over $U$; the caveat in the display is needed because
$\alpha$ lies in $F$, so that $X-\alpha$ is a prime element of $F[X]$ and a
rational function may have a pole at $\alpha$. By contrast, in Section
\ref{sec:main} we shall attach a
\emph{rank one} valuation domain $V_{\alpha}$ of $K(X)$ to elements $\alpha$ of
$\bK$ that are transcendental over $K$, and the two constructions should not be
confused: for $\alpha\in\oK$ transcendental over $K$ the relation between them is
$V_{\alpha}=\oV_{\alpha,\infty}\cap K(X)$, as we shall see.

An extension $W$ of $V$ to $K(X)$ is \emph{torsion} if
$\Gamma_w\subseteq\Gamma_{\overline v}$, where $\Gamma_w$ is the value group of
$W$; equivalently, if $\Gamma_w/\Gamma_v$ is a torsion group. Every residually
transcendental extension of $V$ to $K(X)$ is torsion, and not every torsion extension is
residually transcendental.
Indeed, by Abhyankar's inequality
\cite[Ch.~VI, \S10, no.~3, Cor.~1]{Bourb},
$\ratrk(\Gamma_w/\Gamma_v)+\operatorname{trdeg}(k_w\mid k_v)\leq
\operatorname{trdeg}(K(X)\mid K)=1$, so that a residually transcendental
extension has $\ratrk(\Gamma_w/\Gamma_v)=0$, that is, it is torsion; and the second
assertion follows from Remark \ref{rem:Valpha},
applied to any $\alpha\in\bK\setminus\oK$. Such an $\alpha$ exists, and is
transcendental over $K$: an algebraically closed field carrying a valuation of rank one whose
value group is not discrete is never complete, so that $\bK\setminus\oK\neq\emptyset$, and
every element of $\bK$ algebraic over $K$ lies in $\oK$.
If a residually transcendental extension $W$ of $V$ to $K(X)$
contains $V[X]$, then $W=\oV_{\alpha,\delta}\cap K(X)$ for
some $(\alpha,\delta)\in\oK\times\Gamma_{\overline{v}}$
\cite[Theorem 1.6 and Corollary 2.11]{ChPer}.
\section{A bound for the ramification index in a compositum}\label{sec:ram}

The notation of this section is local to it and is unrelated to the one fixed in
Section \ref{sec:notation}. Throughout, $(F,\nu)$ is a valued field which is
complete
with respect to the discrete valuation $\nu$ of rank one, $\kappa$ is its residue
field and $\overline F$ is an algebraic closure of $F$. Since $F$ is complete,
$\nu$ admits a unique extension to $\overline F$, and it is with respect to this
extension that all the ramification indices below are taken. We use freely two
standard properties of such a field (see, for example, \cite{ZS2}): it is henselian, and
every finite extension of it is again complete with respect to a discrete
valuation of rank one; and $[E:F]=e(E\mid F)\cdot f(E\mid F)$,
where $f(E\mid F)$ is the residue degree of the valued field extension $E|F$, for every finite
extension $E$ of $F$, no separability being required.

\begin{lemma}\label{lem:ram}
Let $F_1,F_2$ be finite extensions of $F$, let $L=F_1F_2$ be their compositum in
$\overline F$, and put $e_1=e(F_1\mid F)$ and $e=e(L\mid F_2)$. If the residue
field extension of $F_1\mid F$ is separable, then
$$e\leq e_1 .$$
In particular the inequality holds for all $F_1$ and $F_2$ as soon as $\kappa$ is
perfect.
\end{lemma}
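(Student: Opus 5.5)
Our plan is to split $F_1\mid F$ into an unramified part and a totally ramified part, and then control each part separately after base change to $F_2$.

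\emph{Step 1: the unramified part.} Let $\bar\kappa_1$ be the residue field of $F_1$. By hypothesis $\bar\kappa_1\mid\kappa$ is separable, so it is a finite separable extension $\kappa(\bar\theta)$. Let $\bar g$ be the minimal polynomial of $\bar\theta$ over $\kappa$, and let $g\in\mathcal O_F[X]$ be a monic lift of $\bar g$. Since $\bar g$ is separable, Hensel's lemma (applied in the complete field $F_1$) gives a root $\theta\in F_1$ of $g$ with residue $\bar\theta$. Put $T=F(\theta)$. Then $[T:F]\le\deg g=f(F_1\mid F)$, and the residue field of $T$ contains $\bar\theta$. Hence $f(T\mid F)=[T:F]=f(F_1\mid F)$ and $e(T\mid F)=1$. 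By multiplicativity of $e$ and $f$ in the tower $F\subseteq T\subseteq F_1$, we get $f(F_1\mid T)=1$ and $e(F_1\mid T)=e_1$. So $F_1\mid T$ is totally ramified of degree $e_1$.

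\emph{Step 2: base change of the unramified part.} Consider $T':=TF_2=F_2(\theta)$. The minimal polynomial of $\theta$ over $F_2$ divides $g$. Its reduction divides $\bar g$, which is separable, so it is a power of no nontrivial factor and has simple roots. By Hensel's lemma, an irreducible monic polynomial over the henselian field $F_2$ whose reduction is separable has irreducible reduction, since otherwise it would factor. Hence $F_2(\theta)\mid F_2$ has residue degree equal to its degree, that is, $e(T'\mid F_2)=1$.

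\emph{Step 3: base change of the totally ramified part.} We have $L=F_1F_2=F_1T'$, and $L=T'(F_1)$ is generated over $T'$ by $F_1$. Since $F_1\mid T$ is totally ramified, $F_1=T(\varpi_1)$ for a uniformizer $\varpi_1$ of $F_1$. Indeed, $T(\varpi_1)$ already has ramification index $e_1$ over $T$, hence degree at least $e_1=[F_1:T]$. Therefore $L=T'(\varpi_1)$ and
$$e(L\mid T')\le[L:T']\le[F_1:T]=e_1.$$

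\emph{Conclusion and main obstacle.} Combining the steps, $e=e(L\mid F_2)=e(L\mid T')\,e(T'\mid F_2)=e(L\mid T')\le e_1$. The perfect case follows immediately, since every finite extension of a perfect field is separable. The step we expect to need the most care is Step 2, namely showing that the unramified extension $T\mid F$ stays unramified after compositum with an arbitrary, possibly inseparable, $F_2$. This is exactly where separability of $\bar g$ enters. The argument rests on Hensel's lemma applied to the simple roots of $\bar g$ over the residue field of $F_2$.
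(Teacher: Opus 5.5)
Your proposal is correct and follows essentially the same route as the paper. Both arguments take the unramified subextension $T=F(\theta)$ via Hensel's lemma, show that its base change $T'=F_2(\theta)$ stays unramified over $F_2$ because the reduction of the minimal polynomial divides the separable $\bar g$, and conclude with $e(L\mid F_2)=e(L\mid T')\le[L:T']\le[F_1:T]=e_1$. The only difference is in Step 3: the paper bounds $[L:T']$ by showing that the $T'$-span of a $T$-basis of $F_1$ is already a field, whereas you write $F_1=T(\varpi_1)$ for a uniformizer $\varpi_1$ of $F_1$, which gives the same bound just as directly.
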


\begin{proof}
Write $f_1=f(F_1\mid F)$ and, for a subfield $E$ of $\overline F$ containing $F$,
$O_E$ and $\kappa_E$ for the valuation ring of $E$ and its residue field.

\vspace{.2cm}
\noindent
{\bf Step 1.} {\em $F_1$ contains an unramified extension of $F$ of degree
$f_1$.} Let $\ell$ be the residue field of $F_1$. By hypothesis $\ell\mid\kappa$
is separable, and it is finite, of degree $f_1$, because $[F_1:F]<\infty$; by the
primitive element theorem $\ell=\kappa(\overline\theta)$ for some
$\overline\theta$. Let $g\in O_F[X]$ be a monic lift of the minimal polynomial of
$\overline\theta$ over $\kappa$, of degree $f_1$. Then $\overline g$ is separable,
so that $\overline\theta$ is a simple root of it, and Hensel's lemma in the henselian
field $F_1$ yields a root $\theta\in O_{F_1}$ of $g$ whose residue is $\overline\theta$. Put
$F_1^{\mathrm{ur}}=F(\theta)$; since $\theta\in O_{F_1}$, this is a subfield of
$F_1$, a fact used in Step 3. Then $[F_1^{\mathrm{ur}}:F]\leq\deg g=f_1$, while
the residue field of $F_1^{\mathrm{ur}}$ contains $\kappa(\overline\theta)=\ell$,
so that $f(F_1^{\mathrm{ur}}\mid F)\geq f_1$; by the degree formula,
$[F_1^{\mathrm{ur}}:F]=f_1$, $e(F_1^{\mathrm{ur}}\mid F)=1$ and the residue field
of $F_1^{\mathrm{ur}}$ is $\ell$. Again by the degree formula,
$$[F_1:F_1^{\mathrm{ur}}]=\frac{[F_1:F]}{[F_1^{\mathrm{ur}}:F]}
=\frac{e_1f_1}{f_1}=e_1 .$$

\vspace{.2cm}
\noindent
{\bf Step 2.} {\em $F_2':=F_1^{\mathrm{ur}}F_2=F_2(\theta)$ is unramified
over $F_2$.} Let $h\in O_{F_2}[X]$ be the minimal polynomial of $\theta$ over
$F_2$; it is monic with coefficients in $O_{F_2}$, because $\theta$ is integral
over $O_F$ and $O_{F_2}$ is integrally closed. Moreover $h$ divides $g$ in
$O_{F_2}[X]$: writing $g=hq$ with $q\in F_2[X]$ monic, Gauss's lemma over the
valuation ring $O_{F_2}$ gives $q\in O_{F_2}[X]$, both $g$ and $h$ being monic
with coefficients in $O_{F_2}$. Hence $\overline h$ divides $\overline g$ in
$\kappa_{F_2}[X]$; the latter is separable, being the reduction of the separable
$\overline g\in\kappa[X]$ and separability being preserved under extension of the
base field, so $\overline h$ is separable too; a factorization of
$\overline h$ into two non-constant factors would be a factorization into coprime
factors, which Hensel's lemma would lift to a factorization of $h$, against its
irreducibility. So $\overline h$ is irreducible, the residue field of $F_2'$
contains $\kappa_{F_2}(\overline\theta)$, which has degree
$\deg\overline h=\deg h=[F_2':F_2]$ over $\kappa_{F_2}$, and consequently
$f(F_2'\mid F_2)=[F_2':F_2]$ and $e(F_2'\mid F_2)=1$.

\vspace{.2cm}
\noindent
{\bf Step 3.} {\em Conclusion.} As $F_1^{\mathrm{ur}}\subseteq F_1$ we have
$L=F_1F_2=F_1F_1^{\mathrm{ur}}F_2=F_1F_2'$, and $F_1^{\mathrm{ur}}\subseteq
F_2'$; writing $F_1=\sum\limits_{i=1}^{e_1}F_1^{\mathrm{ur}}\omega_i$, the $F_2'$-span
$A=\sum\limits_iF_2'\omega_i$ is closed under multiplication, because
$\omega_i\omega_j\in F_1=\sum\limits_kF_1^{\mathrm{ur}}\omega_k$ and
$F_1^{\mathrm{ur}}\subseteq F_2'$. Thus $A$ is a finite-dimensional $F_2'$-algebra
which is an integral domain, being contained in $L$, hence a field; as it contains $F_2'$
and $F_1$ we get $A=F_1F_2'=L$, so that $[L:F_2']\leq e_1$. Therefore
$$e=e(L\mid F_2)=e(L\mid F_2')\cdot e(F_2'\mid F_2)=e(L\mid F_2')\leq[L:F_2']
\leq e_1,$$
which is the assertion.
\end{proof}

When $F_1\mid F$ is tame, that is, when in addition $\operatorname{char}\kappa\nmid e_1$, 
the inequality is a form of Abhyankar's lemma and one even gets the divisibility $e\mid e_1$, 
together with $e(F_1F_2\mid F)=\operatorname{lcm}\big(e(F_1\mid F),e(F_2\mid F)\big)$: see \cite{ChHa} 
for local fields --- a setting narrower than the complete discretely valued fields of this section, 
so that the comparison offered there is with a special case --- and \cite[Theorem 2]{DK20} 
for valued fields whose value group has rational rank one. Tameness is not available in Section \ref{sec:main}, 
where the extensions occurring might be wildly ramified in general.
Steps 1 and 2 above are nothing but the standard theory of unramified extensions 
of a complete discretely valued field --- the
maximal unramified subextension of $F_1\mid F$ exists and has degree $f_1$, and
unramifiedness is preserved by base change to $F_2$ --- for which we refer to
\cite[Ch.~II, \S7]{Neu} or \cite[Ch.~III]{Ser}, and the same two steps are carried
out in \cite[Lemmas 6 and 7]{SK20}. Granting them, the lemma follows in two lines:
$F_2'=F_1^{\mathrm{ur}}F_2$ is unramified over $F_2$, and $F_1^{\mathrm{ur}}
\subseteq F_1\cap F_2'$ with $L=F_1F_2'$, so that
$$e(L\mid F_2)=e(L\mid F_2')\leq[L:F_2']\leq[F_1:F_1^{\mathrm{ur}}]=e_1,$$
the last inequality being the elementary bound $[AB:B]\leq[A:A\cap B]$ for the
degree of a compositum. We have written the argument out in full only because we
could not locate the statement in the generality we need --- no tameness, and
separability of one of the two residue field extensions only; \cite[Theorems 2 and
3]{SK20}, for instance, compare $e(F_1F_2\mid F)$ with $e(F_1\mid F)e(F_2\mid F)$
under a linear disjointness hypothesis that we cannot impose here.

Neither the conclusion of Lemma \ref{lem:ram} nor its hypothesis can be improved,
and the two examples that follow show why. In Example \ref{rem:wild} the extension
$F_1\mid F$ is wildly ramified and the divisibility $e\mid e_1$ fails, so that the inequality
is the right statement; in Example \ref{rem:sharp} the separability hypothesis, which is used
only in Step 1, is dropped and the bound itself fails.

\begin{example}\label{rem:wild}
Let $p$ be an odd prime and $F=\Q_p$; let $p^{1/p}$ be a root of the Eisenstein
polynomial $X^p-p$, let $\zeta$ be a primitive $p$-th root of unity, and put
$$F_1=F(p^{1/p}),\qquad F_2=F(\zeta p^{1/p}),\qquad L=F_1F_2 .$$
Then $F_1\mid F$ and $F_2\mid F$ are totally ramified of degree $p$, so that
$e_1=e(F_2\mid F)=p$ and both residue field extensions are trivial, hence separable; and
$$e=e(L\mid F_2)=p-1 ,$$
which does not divide $e_1$. Moreover $e(L\mid F)=p(p-1)$ is not the least common multiple of
$e_1$ and $e(F_2\mid F)$.
\end{example}

\begin{proof}
The polynomial $X^p-p$ is Eisenstein, so $F_1\mid F$ is totally ramified of degree
$p$, and the same holds for $F_2$, whose generator $\zeta p^{1/p}$ is again a root of
$X^p-p$. The compositum
$L=F_1F_2=F(p^{1/p},\zeta)$ has degree $p(p-1)$ over $F$, the degrees $p$ and
$p-1$ being coprime, and it is again totally ramified, since it contains two
totally ramified subextensions of coprime degrees. Hence
$$e=e(L\mid F_2)=\frac{p(p-1)}{p}=p-1<p=e_1 ,$$
and $p-1$ does not divide $p$; and $e(L\mid F)=p(p-1)$, whereas the least common
multiple of $e_1$ and $e(F_2\mid F)$ is $p$.
\end{proof}

The assertion that $p-1$ does not divide $e_1$ is the only point at which the
hypothesis that $p$ be odd is used, and it is indispensable: for $p=2$ one has $\zeta=-1$, so
that $F_2=F_1$, $L=F_1$ and $e=1$, which does divide $e_1=2$. 
That $e(L\mid F)$ is not the least common multiple of the two
ramification indices
is consistent with \cite[Theorem 2]{DK20}, whose hypotheses exclude the present
example, the extension $F_1\mid F$ being wildly ramified.

\begin{example}\label{rem:sharp}
Let $p$ be a prime, let $u$ be an indeterminate over $\mathbb{F}_p$ and put
$\kappa_0=\mathbb{F}_p(u)$ and $F=\kappa_0((t))$ with the $t$-adic valuation, $\nu(t)=1$;
the field $F$ is complete, $\nu$ is discrete of rank one, and the residue field
$\kappa=\kappa_0$ is imperfect. Put
$$F_1=F(x)\ \text{ with }\ x^p=u,\qquad F_2=F(y)\ \text{ with }\ y^p=u+t,
\qquad L=F_1F_2 .$$
Then $[F_1:F]=[F_2:F]=p$, both residue field extensions are purely inseparable of degree
$p$, and $e_1=e(F_1\mid F)=1$, whereas
$$e=e(L\mid F_2)\geq p>1=e_1 .$$
\end{example}

\begin{proof}
The residue field $\kappa_0$ is imperfect because $\kappa_0^p=\mathbb{F}_p(u^p)$
does not contain $u$.
Neither $u$ nor $u+t$ lies in $F^p=\mathbb{F}_p(u^p)((t^p))$: the first because
$u\notin\kappa_0^p$, the second because every exponent occurring in an element of $F^p$ is
divisible by $p$. Hence $[F_1:F]=[F_2:F]=p$; the residues of
$x$ and of $y$ are both the $p$-th root of $u$ in $\overline{\kappa}$, so that
$f(F_i\mid F)=p$ and hence, by the degree formula, $e(F_i\mid F)=1$ for
$i=1,2$. In particular $e_1=e(F_1\mid F)=1$ and $\Gamma_{F_2}=\Gamma_F$. On the
other hand, in characteristic $p$,
$$(y-x)^p=y^p-x^p=t ,$$
so that $L$ contains a $p$-th root of $t$ and therefore
$e(L\mid F_2)\geq p$.
\end{proof}

Thus the conclusion of Lemma \ref{lem:ram} fails, already for extensions of
degree $p$, as soon as the residue field extension of $F_1\mid F$ is allowed to
be inseparable: here it is purely inseparable. This is also why
\cite[Theorem 2]{DK20} does not apply to the present example any more than it
does to the previous one, although the ramification index of $F_1\mid F$ is $1$:
tameness requires the residue field extension to be separable, and not merely
that $\operatorname{char}\kappa\nmid e_1$.

\section{Discrete residually algebraic extensions}\label{sec:main}

The main result is Theorem \ref{thm:main}: for $k_v$ perfect, every DVR of
$K(X)$ extending $V$ and residually algebraic over it has the form
$V_{\alpha}$ for an
$\alpha\in\bK$ transcendental over $K$, with $\alpha\in\oK$ exactly when the
residue field extension is finite.  We construct $V_{\alpha}$ in
\S\ref{subsec:Valpha}, prove the theorem in \S\ref{subsec:mainthm}, settle in
\S\ref{subsec:cases} which case occurs, and illustrate both in
\S\ref{subsec:families}.

\subsection{The valuation domains {$V_{\alpha}$}}\label{subsec:Valpha}

For an element $\alpha\in\bK$ which is transcendental over $K$ we set
$$V_{\alpha}=\{\phi\in K(X)\mid \phi(\alpha)\in\bO\}
=\{\phi\in K(X)\mid v(\phi(\alpha))\geq0\},$$
which is well defined, since $g(\alpha)\neq0$ for every non-zero $g\in K[X]$, and
which is a valuation domain of $K(X)$ lying over $V$, being the pullback of
$\bO\cap K(\alpha)$ under the isomorphism $K(X)\to K(\alpha)$,
$\phi\mapsto\phi(\alpha)$. In particular $V_{\alpha}$ has rank one and its value
group is $v(K(\alpha)^{\times})$. For $\alpha\in\oK$ transcendental over $K$ one
has
\begin{equation}\label{eq:trace}
V_{\alpha}=\oV_{\alpha,\infty}\cap K(X),
\end{equation}
in the notation of Section \ref{sec:monomial}: both sides consist of the
$\phi\in K(X)$ with $v(\phi(\alpha))\geq0$, and no rational function of $K(X)$ has
a pole at $\alpha$, $\alpha$ being transcendental over $K$. We stress that the
rank two valuation domain $\oV_{\alpha,\infty}$ of $\oK(X)$ and its rank one
contraction $V_{\alpha}=\oV_{\alpha,\infty}\cap K(X)$ are different objects.

\begin{remark}\label{rem:conj}
Let $\alpha\in\bK\setminus\oK$. Then $\alpha$ is transcendental over $\oK$, the
field $\oK$ being algebraically closed, so that
$$\overline{V_{\alpha}}=\{\phi\in\oK(X)\mid v(\phi(\alpha))\geq0\}$$
is defined; it is the pullback of $\bO\cap\oK(\alpha)$ under the isomorphism
$\oK(X)\to\oK(\alpha)$, hence a valuation domain of $\oK(X)$ of rank one lying
over $\oV$, and its restriction to $K(X)$ is $V_{\alpha}$. Let $\sigma\in G$,
extended by continuity to $\bK$ as in (G2); then $\sigma$ maps $\oK$ onto itself,
so $\sigma(\alpha)\in\bK\setminus\oK$, and
\begin{equation}\label{eq:conj}
\sigma(\overline{V_{\alpha}})=\overline{V_{\sigma(\alpha)}} .
\end{equation}
Indeed, for $\phi\in\oK(X)$ one has $(\sigma^{-1}\phi)(\alpha)=
\sigma^{-1}\big(\phi(\sigma(\alpha))\big)$, whence, $v$ being $\sigma$-invariant,
$$\phi\in\sigma(\overline{V_{\alpha}})\iff
v\big((\sigma^{-1}\phi)(\alpha)\big)\geq0\iff
v\big(\phi(\sigma(\alpha))\big)\geq0\iff\phi\in\overline{V_{\sigma(\alpha)}} .$$
\end{remark}

\begin{remark}\label{rem:Valpha}
Let $\alpha\in\bK$ be transcendental over $K$. Then $V_{\alpha}$ is a residually algebraic torsion
extension of $V$ to $K(X)$.
Indeed, the value group of
$V_{\alpha}$ is $v(K(\alpha)^{\times})\subseteq v(\bK^{\times})
=\Gamma_{\overline v}$ and its residue field embeds into the residue field
$\ok$ of $\bO$, which is algebraic over $k_v$. The same inclusion gives the rank: a
non-trivial subgroup of $\Gamma_{\overline v}\cong\Q$ has no non-trivial convex subgroup.
This argument uses nothing special about $V_{\alpha}$: being torsion
over $V$ means precisely that $\Gamma_w$ lies in $\Gamma_{\overline v}$, so that every
torsion extension of $V$ to $K(X)$ has rank one.
\end{remark}

The next lemma identifies the completion of $K(X)$ 
with respect to $V_{\gamma}$ for $\gamma\in\bK$, transcendental over $K$.
It is what makes assertion (2) of Theorem \ref{thm:main} independent of the choice of
$\alpha$, through Corollary \ref{lem:rank}, and it will be used again in Remark
\ref{rem:uniqueness} and in Proposition \ref{rem:byproduct}. Throughout, ``closure'' means closure
in the $v$-adic topology, not algebraic closure.

\begin{lemma}\label{lem:completion}
Let $\gamma\in\bK$ be transcendental over $K$ and let $C_{\gamma}$ be the closure of
$K(\gamma)$ in $\bK$. Then the evaluation $\phi\mapsto\phi(\gamma)$ identifies the completion
of $K(X)$, with respect to the valuation associated with $V_{\gamma}$, with $C_{\gamma}$;
moreover $\K\subseteq C_{\gamma}$, and
$$C_{\gamma}=\K(\gamma)\ \text{ if }\ \gamma\in\oK,\qquad
[C_{\gamma}:\K]=\infty\ \text{ if }\ \gamma\notin\oK .$$
In particular $[C_{\gamma}:\K]<\infty$ precisely when $\gamma\in\oK$.
\end{lemma}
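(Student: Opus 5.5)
The proof splits into four parts: the identification of the completion, the inclusion $\K\subseteq C_\gamma$, the algebraic case, and the transcendental case.

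\emph{The completion.} The map $\phi\mapsto\phi(\gamma)$ is a field isomorphism $K(X)\to K(\gamma)$. By the definition of $V_\gamma$, it carries the valuation of $V_\gamma$ to the restriction of $v$ to $K(\gamma)$, so it is an isometric embedding into the complete field $\bK$. A completion of a rank one valued field is determined, up to unique isometric isomorphism, as the closure of the field inside any complete extension. Hence the completion of $(K(X),V_\gamma)$ is identified with the closure $C_\gamma$ of $K(\gamma)$ in $\bK$.

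\emph{The inclusion $\K\subseteq C_\gamma$.} Since $K\subseteq K(\gamma)$, the closure of $K$ in $\bK$ lies in $C_\gamma$. That closure is $\K$, because $\K$ is complete, hence closed in $\bK$, and $K$ is dense in it.

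\emph{The case $\gamma\in\oK$.} Here $\K(\gamma)$ is a finite extension of the complete discretely valued field $\K$, so it is complete (Section \ref{sec:ram}) and therefore closed in $\bK$. It contains $K(\gamma)$, so $C_\gamma\subseteq\K(\gamma)$. Conversely, $C_\gamma$ is a field containing $\K$ and $\gamma$, so $\K(\gamma)\subseteq C_\gamma$. Hence $C_\gamma=\K(\gamma)$.

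\emph{The case $\gamma\notin\oK$.} Then $\gamma\in C_\gamma$ is transcendental over $\oK$, hence over $\K$. So $C_\gamma\supseteq\K(\gamma)$, which has infinite transcendence degree zero would fail: $\K(\gamma)$ is transcendental over $\K$, and therefore $[C_\gamma:\K]=\infty$.

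The final sentence of the lemma follows from the two cases, since $[\K(\gamma):\K]<\infty$ when $\gamma\in\oK$. The only step needing care is the use of the standard fact that finite extensions of a complete discretely valued field are complete, so that $\K(\gamma)$ is closed in $\bK$. This holds because the extension of $v$ is unique and all norms on a finite-dimensional space over a complete field are equivalent. Otherwise the argument is routine.
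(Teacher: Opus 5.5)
Your proof is correct and follows the paper's argument step for step: the closure $C_\gamma$ of $K(\gamma)$ in the complete field $\bK$ realizes the completion, $\K$ is closed in $\bK$ and so lies in $C_\gamma$, $\K(\gamma)$ is complete and hence equal to $C_\gamma$ when $\gamma\in\oK$, and $C_\gamma$ contains $\gamma$, which is transcendental over $\K$, when $\gamma\notin\oK$. The one thing to fix is the garbled sentence in the last case (``which has infinite transcendence degree zero would fail''); it should simply say that, since $\gamma\in C_\gamma$ is transcendental over $\K$, the extension $C_\gamma\mid\K$ cannot be finite.
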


\begin{proof}
The evaluation $\phi\mapsto\phi(\gamma)$ is an isomorphism
of the valued field $K(X)$, equipped with the valuation attached to $V_{\gamma}$,
onto the subfield $K(\gamma)$ of $\bK$ with the valuation induced by $v$; as a
closed subfield of the complete field $\bK$, the field $C_{\gamma}$ is complete,
and it therefore realizes the completion of $K(X)$ with respect to that
valuation. Note that $K\subseteq\K\subseteq\oK\subseteq\bK$ and that $\K$, being
complete, is closed in $\bK$: the closure of $K$ in $\bK$ is thus $\K$ itself, and
$\K\subseteq C_{\gamma}$.

Suppose first that $\gamma\in\oK$. Then $\K(\gamma)$ is a finite extension of the
complete field $\K$,
hence is itself complete and so closed in $\bK$; as it contains $K(\gamma)$ we get
$C_{\gamma}\subseteq\K(\gamma)$, while $C_{\gamma}$ is a field containing $\K$ and
$\gamma$, so that $C_{\gamma}=\K(\gamma)$ and $[C_{\gamma}:\K]<\infty$. Suppose now
that $\gamma\notin\oK$. Then $\gamma\in C_{\gamma}$, so $[C_{\gamma}:\K]<\infty$ would make
$\gamma$
algebraic over $\K$; but every element of $\bK$ algebraic over $\K$ lies in $\oK$,
the field $\oK$ being algebraically closed. Hence
$[C_{\gamma}:\K]=\infty$.
\end{proof}

\begin{corollary}\label{lem:rank}
Let $\alpha\in\oK$ and $\beta\in\bK\setminus\oK$ be transcendental over $K$.
Then $V_{\alpha}\neq V_{\beta}$.
\end{corollary}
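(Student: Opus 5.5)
We argue by contradiction: we compare the completions of $K(X)$ with respect to the two valuation domains, using Lemma \ref{lem:completion}. Suppose $V_{\alpha}=V_{\beta}$. Then the two valuation domains define the same valuation on $K(X)$, up to equivalence. Both are of rank one by Remark \ref{rem:Valpha}, and equivalent rank one valuations induce the same topology. Hence the completions of $K(X)$ with respect to $V_{\alpha}$ and to $V_{\beta}$ coincide as topological fields, and in fact as valued fields up to equivalence.

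By Lemma \ref{lem:completion}, the first completion is identified with $C_{\alpha}=\K(\alpha)$, which is finite over $\K$. The second is identified with $C_{\beta}$, with $[C_{\beta}:\K]=\infty$.

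To compare these degrees we need a copy of $\K$ inside the abstract completion, independent of the choice of $\alpha$ or $\beta$. The natural choice is the closure of $K$ inside the completion of $K(X)$. Under either evaluation map, $K$ goes identically to $K\subseteq\bK$, and its closure is $\K$, as shown in the proof of Lemma \ref{lem:completion}.

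The key step is to check that the resulting isomorphism $C_{\alpha}\cong C_{\beta}$ restricts to the identity on $\K$. Concretely, we take the composite map $\phi(\alpha)\mapsto\phi(\beta)$ from $K(\alpha)$ to $K(\beta)$. If $V_{\alpha}=V_{\beta}$, this map is a valued field isomorphism, after rescaling the valuation by a positive constant. Since $v$ is normalized the same way on $K$, and both valuations restrict to $v$ on $K$, the constant is $1$, so the map is an isometry. It therefore extends by continuity to an isometric isomorphism $C_{\alpha}\to C_{\beta}$. This extension fixes $K$ pointwise, and so it fixes the closure $\K$ pointwise.

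It follows that $[C_{\alpha}:\K]=[C_{\beta}:\K]$, which contradicts finiteness on one side and infiniteness on the other.

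The only delicate point is that continuity argument: the identification of the completions must be compatible with the embedding of $\K$. Once the map is shown to be an isometry fixing $K$, the rest is immediate.
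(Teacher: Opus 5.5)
Your proposal is correct and follows the paper's proof: you compose the two identifications of Lemma \ref{lem:completion} into an isometric isomorphism $C_{\alpha}\to C_{\beta}$ that fixes $K$, hence fixes $\K$ by continuity, and you then compare $[C_{\alpha}:\K]<\infty$ with $[C_{\beta}:\K]=\infty$. Your explicit check that the rescaling constant is $1$, because both valuations restrict to $v$ on $K$, is a detail the paper leaves implicit in the phrase ``isomorphism of valued fields''.
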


\begin{proof}
Suppose $V_{\alpha}=V_{\beta}$. Composing the two identifications
provided by Lemma \ref{lem:completion} we obtain an isomorphism
$\iota\colon C_{\alpha}\to
C_{\beta}$ of valued fields which fixes $K$ pointwise; being an isometry, $\iota$
is continuous, hence it fixes pointwise the closure $\K$ of $K$. Therefore
$\iota$ is an isomorphism of extensions of $\K$, and
$$[C_{\alpha}:\K]=[C_{\beta}:\K].$$
By Lemma \ref{lem:completion} the left-hand side is finite and the right-hand side
is infinite, a contradiction.
\end{proof}

\begin{remark}\label{rem:notalgebraic}
One cannot argue instead by comparing the extensions of $V_{\alpha}$ to $\oK(X)$,
although $\oV_{\alpha,\infty}$ has rank two by \eqref{eq:trace} and
Section \ref{sec:monomial} whereas $\overline{V_{\beta}}$ has rank one by
Remark \ref{rem:conj}. Such a
comparison would require the two extensions to be conjugate, and the transitivity
statement (G1) of Section \ref{sec:notation} is available only over $\K(X)$: the
extension $\oK(X)\mid K(X)$ is \emph{not} algebraic unless $K=\K$, since $\K\mid
K$ is transcendental whenever $K$ is not complete.
\end{remark}

\subsection{\texorpdfstring{The main theorem}{The main theorem}}\label{subsec:mainthm}

We are now ready to prove the main result of this paper, which generalizes
\cite[Corollary 2.28]{p25} and \cite[Theorem 2.5]{PerTransc}.

\begin{theorem}\label{thm:main}
Let $V$ be a DVR with quotient field $K$ and perfect residue field, and let $W$
be a DVR with quotient field $K(X)$ which
extends $V$ and is residually algebraic over $V$. Denote by $k_w$ and $k_v$ the
residue fields of $W$ and of $V$. Then:
\begin{enumerate}
\item there exists $\alpha\in\bK$, transcendental over $K$, such that $W=V_{\alpha}$;
\item for every such $\alpha$, one has $\alpha\in\oK\iff[k_w:k_v]<\infty$;
\item when $[k_w:k_v]=\infty$, every such $\alpha$ is the limit of
a Cauchy sequence $\{s_n\}\subset\oK$ with $e_{s_n}=[\Gamma_w:\Gamma_v]$ for all large $n$.
\end{enumerate}
\end{theorem}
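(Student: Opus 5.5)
We plan to pass from $K(X)$ to $\K(X)$ and then to $\oK(X)$, where the extension can be approximated by monomial valuations. First we note that $W$ is torsion, since $\Gamma_w\cong\Z$ contains $\Gamma_v$ with finite index. Let $\widehat W$ be the completion of $K(X)$ with respect to $W$. The closure of $K$ in $\widehat W$ is $\K$, and the closure of $K(X)$ contains $\K(X)$, so $W$ induces a valuation $\widehat w$ on $\K(X)$. This valuation is still discrete, with the same value group and the same residue field $k_w$. We then choose an extension $\overline w$ of $\widehat w$ to $\oK(X)$, an algebraic extension. Its value group lies in $\Gamma_{\overline v}$ and its residue field in $\ok$.

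Since $\overline w$ is residually algebraic and torsion over $\oV$ (the case $\delta=\infty$ will be handled separately), every $\overline w$-ball meets $\oK$ in the sense of \cite{apz90}. So we choose $s_n\in\oK$ with $\delta_n:=\overline w(X-s_n)$ strictly increasing, and we may take $(s_n,\delta_n)$ to be minimal pairs over $\K$. Then for each $\phi\in\K[X]$ we have $\overline w(\phi)=v_{s_n,\delta_n}(\phi)$ once $n$ is large. There are two cases.

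\emph{Case A: $\sup_n\delta_n=\infty$.} Then $v(s_{n+1}-s_n)=\delta_n\to\infty$, so $\{s_n\}$ is Cauchy and has a limit $\alpha\in\bK$. For $\phi\in K[X]$, continuity gives $v(\phi(\alpha))=\lim v_{s_n,\delta_n}(\phi)=\overline w(\phi)$ once $\phi(\alpha)\ne0$. Hence $W=V_\alpha$. Moreover $\alpha$ is transcendental over $K$: otherwise some nonzero $\phi\in K[X]$ would satisfy $\overline w(\phi)=\infty$, which is impossible for a valuation of $K(X)$.

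\emph{Case B: $\sup_n\delta_n=\delta<\infty$.} We intend to rule this out using discreteness, and this is where Lemma~\ref{lem:ram} enters. The key claim is that $e(\K(s_n)\mid\K)$ is bounded by $E:=[\Gamma_w:\Gamma_v]$ for all large $n$. We expect this to be the main obstacle.

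First we show that $\overline w$ restricted to $\K(s_n)(X)$ has value group a finite-index extension of $\Gamma_w$ of controlled index. Since $\delta_n\in\overline w$-values and $(s_n,\delta_n)$ is minimal, the value group of $\K(X)$ under $\widehat w$ contains $v(\K(s_n)^\times)$ for large $n$. To prove this we use the description of values $v_{s_n,\delta_n}(\phi)$ for $\phi\in\K[X]$ of degree below $[\K(s_{n+1}):\K]$, where the relevant values are $v(\phi(s_n))$. The minimality of the pairs, together with the fact that $\K(s_n)\subseteq\K(s_{n+1})$ up to conjugation by $G$ (using (G1) and (G2)), then lets us apply Lemma~\ref{lem:ram}. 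We take $F_1=\K(s_n)$ and $F_2=\K(\sigma(s_m))$; perfectness makes the residue extension separable. This bounds the ramification gained in composita, and we conclude that $e_{s_n}\le E$ and that the denominators of the $\delta_n$ divide a fixed integer $E$. Hence the $\delta_n$ lie in $\frac1E\Z$. An increasing bounded sequence there is eventually constant, which contradicts strict increase. If instead a maximum $\delta$ is attained, $\overline w=\oV_{s,\delta}$ would be residually transcendental, contrary to hypothesis. So Case A holds, and moreover $e_{s_n}=E$ for large $n$, since $\Gamma_w\subseteq v(\K(s_n)^\times)$ for large $n$ by continuity. This gives (3). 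The case $\overline w(X-s)=\infty$ for some $s\in\oK$ gives $\alpha=s$ directly.

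For (2) we use Lemma~\ref{lem:completion}: $W=V_\alpha$ has completion $C_\alpha$, with residue field $k_w$ and value group $\Gamma_w$. If $\alpha\in\oK$, then $[C_\alpha:\K]<\infty$, and hence $[k_w:k_v]\le[C_\alpha:\K]<\infty$. Conversely, suppose $[k_w:k_v]<\infty$. The completion $C_\alpha$ of $K(X)$ is a complete discretely valued field over $\K$ with $e=E$ and finite $f$. We argue that then $[C_\alpha:\K]=ef<\infty$: lift a residue basis and uniformizer powers, and completeness of $\K$ makes $C_\alpha$ a finite free module over $\K$. By Lemma~\ref{lem:completion}, this finiteness forces $\alpha\in\oK$.
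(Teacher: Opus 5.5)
Your outline follows the paper's proof: pass to $\K(X)$, extend to $\oK(X)$, take minimal pairs $(s_n,\delta_n)$ with $\delta_n=\overline w(X-s_n)$ cofinal in $\Lambda$, get $v(\K(s_n)^\times)\subseteq\Gamma_w$ (hence $e_{s_n}\mid E$) from minimality as in \cite[Theorem 2.3]{APZAll}, and use discreteness together with Lemma~\ref{lem:ram} to force $\delta_n\to\infty$. Your proof of (2) through Lemma~\ref{lem:completion} and $[C_\alpha:\K]=ef$ is also sound.

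\textbf{The gap.} The decisive step in Case~B does not work as written. You invoke ``$\K(s_n)\subseteq\K(s_{n+1})$ up to conjugation by $G$'', which you have not established. It does not follow from \cite{APZAll}, nor from (G1) or (G2): what is available is the nesting $\Gamma_n\subseteq\Gamma_{n+1}$, $k_n\subseteq k_{n+1}$ of the value groups and residue fields of the $\K(s_n)$, not of the fields themselves. Even an inclusion up to conjugation would not by itself put $\delta_n=v(s_{n+1}-s_n)$ into $v(\K(s_{n+1})^\times)$. Only a literal inclusion $s_n\in\K(s_{n+1})$ would do that, and it is exactly what your conclusion $\delta_n\in\frac1E\Z$ silently uses. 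That conclusion is stronger than anything Lemma~\ref{lem:ram} yields; the authors state explicitly that they do not know whether $\delta_n\in\frac1e\Gamma_v$.

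\textbf{The repair.} Apply the lemma to the compositum, with no conjugation at all. The value $\delta_n$ lies in the value group of $\K(s_n,s_{n+1})$. Lemma~\ref{lem:ram} with $F_1=\K(s_n)$ and $F_2=\K(s_{n+1})$ gives $e(\K(s_n,s_{n+1})\mid\K(s_{n+1}))\le e_{s_n}\le E$. Hence $e(\K(s_n,s_{n+1})\mid\K)\le E^2$, using also $e_{s_{n+1}}\le E$, and so $\delta_n\in\frac{1}{(E^2)!}\Gamma_v$. Bounded denominators are all your ``strictly increasing and bounded in a discrete group'' contradiction needs.

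\textbf{Smaller points.}
\begin{enumerate}
\item ``We may take $(s_n,\delta_n)$ to be minimal pairs'' needs an actual construction. Choosing $s_n$ of least degree subject to $\overline w(X-a)=\delta_n$ does not by itself give a minimal pair. You must minimize the degree subject to $\overline w(X-a)\ge\lambda_m$ for a cofinal sequence $\{\lambda_m\}$, as the paper does. Genuine minimality is exactly what gives $\overline w(\phi)=v(\phi(s_n))$ for $\deg\phi<[\K(s_n):\K]$, hence $v(\K(s_n)^\times)\subseteq\Gamma_w$.
\item When $[k_w:k_v]<\infty$ and $K\neq\K$, the element $X$ is algebraic over $\K$ inside $\widehat{K(X)}$. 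So there is no valuation $\widehat w$ on the rational function field $\K(X)$, and no $\overline w$ on $\oK(X)$. Your ``case $\overline w(X-s)=\infty$'' has to be done as in the paper: use the $[L:\K]=ef$ argument you already give for (2), then an isometric $\K$-embedding of the finite extension generated by $X$ into $\oK$.
\item Assertion (3) is claimed for every $\alpha$ with $W=V_\alpha$, but your construction produces one such $\alpha$. Transport your sequence along the $G$-conjugacy of Remark~\ref{rem:uniqueness} to cover the others.
\end{enumerate}
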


\begin{proof}
Since $V$ is a DVR we have $\Gamma_v\cong\Z$; as $W$ extends $V$, the group
$\Gamma_v$ is a non-trivial subgroup of $\Gamma_w\cong\Z$, so that
$e:=[\Gamma_w:\Gamma_v]<\infty$.

Assume first that $[k_w:k_v]<\infty$. We recall the short argument of
\cite[Theorem 2.5]{PerTransc}, which in this case is quicker than the appeal to that theorem
and needs no separability. Put $f=[k_w:k_v]$, let $L=\widehat{K(X)}$ be the completion of
$K(X)$ with respect to $w$ and identify $\K$ with the closure of $K$ in $L$. Completions are
immediate extensions, so that $e(L\mid\K)=e$ and $f(L\mid\K)=f$, both finite; for complete
discretely valued fields this forces $[L:\K]=ef<\infty$. Indeed, if
$\omega_1,\ldots,\omega_f\in O_L$ lift a basis of $k_w$ over $k_v$ and $\varpi$ is a
uniformizer of $L$, successive approximation writes every element of $O_L$ as a convergent
$O_{\K}$-combination of the $\omega_i\varpi^{j}$, $0\leq j<e$. Hence $X\in L$ is algebraic
over $\K$; any $\K$-embedding $\iota\colon L\to\oK$ is isometric, the extension of $v$ to
$\oK$ being unique, so that $\alpha:=\iota(X)$ satisfies $w(\phi)=v(\phi(\alpha))$ for every
$\phi\in K(X)$, that is, $W=V_{\alpha}$. Moreover $\alpha\in\oK$ is transcendental over $K$,
being the image of $X$ under a field embedding fixing $K$, and the case $\alpha=\infty$ of
\cite[Theorem 2.5]{PerTransc} never arises. The perfectness of $k_v$ is not needed in this
case.

Assume now that $[k_w:k_v]=\infty$; we shall produce
$\alpha\in\bK\setminus\oK$ with $W=V_{\alpha}$. One could adapt the arguments
given in \cite{p25} for $V=\Z_{(p)}$ by means of the notion of stacked sequence;
we give instead a shorter argument which uses minimal pairs and results of
\cite{APZAll}.

\vspace{.2cm}
\noindent
{\em Reduction to the case $K=\K$.}
Let $\widehat{K(X)}$ and $\widehat W$ be the completions of $K(X)$ and of $W$
with respect to $w$. The restriction of $w$ to $K$ is $v$, so the closure of $K$
in $\widehat{K(X)}$ is a completion of $K$ and $\K$ is canonically embedded in
$\widehat{K(X)}$. Moreover $X$ is transcendental over $\K$: otherwise $\K(X)$
would be a finite extension of $\K$, hence complete, and, containing the dense
subfield $K(X)$, it would coincide with $\widehat{K(X)}$ (see also
\cite[Lemma 2.4]{PerTransc}); its residue field would then be finite over $k_v$,
contradicting $[k_w:k_v]=\infty$. In particular the field $\K(X)$ of rational
functions over $\K$ is contained in $\widehat{K(X)}$. Put
$\widetilde W=\widehat W\cap\K(X)$. Then
$W=\widehat W\cap K(X)=\widetilde W\cap K(X)$, and $\widetilde W$ is a valuation
domain of $\K(X)$ with $\widetilde W\cap\K=\widehat V$. Since $V\subseteq
\widehat V$ and $W\subseteq\widehat W$ are immediate extensions of valuations, we
have $\Gamma_{\widetilde w}=\Gamma_w\cong\Z$ and
$k_{\widetilde w}=k_w$; hence $\widetilde W$ is a DVR of $\K(X)$, residually
algebraic over $\widehat V$, with residue field of infinite degree over the
residue field $k_v$ of $\widehat V$, and $k_v$ is perfect. Finally, the completion of
an algebraic closure of $\widehat{\K}=\K$ is again $\bK$. It is therefore enough
to prove the assertion for $\widetilde W$ over $\widehat V$, and we may and do
assume from now on that $K=\K$ is complete; note that $(K,v)$ then satisfies the
standing assumptions of Section \ref{sec:ram}, and that, $k_v$ being perfect, the separability
hypothesis of Lemma \ref{lem:ram} holds for every finite extension of $K$; 
no restriction on the characteristic is needed. In this situation
$\aK=\oK$ is an algebraic closure of $K$ and $\bK$ is its completion, 
so that the two notations of Section \ref{sec:notation} coincide from now on.

\vspace{.2cm}
\noindent
{\em The associated pseudo-convergent sequence.}
Let $\overline{W}$ be an extension of $W$ to $\aK(X)$, which exists by \cite[Ch. VI, $\S$ 8, Proposition 6]{Bourb} since 
$\aK(X)$ is algebraic over $K(X)$.  Since $v$ has a unique extension to $\aK$, 
the restriction of $\overline W$ to $\aK$ is necessarily $\aV$, so that $\overline W$ 
is a common extension of $\aV$ and of $W$. Then $\overline W$ is an immediate extension
of $\aV$: its residue field is algebraic over $\ok$, because $\aK(X)\mid K(X)$ is
algebraic and $k_w$ is algebraic over $k_v$, and $\ok$ is algebraically closed;
and $\Gamma_{\overline w}$ is a torsion extension of the divisible group
$\Gamma_{\overline v}$, hence equals it. The torsion assertion holds for the
same reason as the first: the extension $\aK(X)\mid K(X)$ being algebraic, 
$\Gamma_{\overline w}/\Gamma_w$ is torsion, and $\Gamma_w$ contains $\Gamma_v$ with finite index $e$; 
so $\Gamma_{\overline w}/\Gamma_v$ is torsion, 
and a fortiori so is $\Gamma_{\overline w}/\Gamma_{\overline v}$.

Following \cite[p. 287]{APZAll} we fix a strictly increasing sequence
$\{\lambda_n\}_{n\in\N}$ which is cofinal in
$\Lambda:=\{\overline{w}(X-a)\mid a\in \aK\}$. Such a sequence exists and may be
indexed by $\N$: indeed $\Lambda\subseteq\Gamma_{\overline v}\cong\Q$ is
countable, and $\Lambda$ has no maximum. The proof follows from the fact that $\overline W$ 
is immediate over $\overline V$ (see for example the proof of \cite[Theorem 1]{Kap}), 
for the sake of the reader we repeat here the argument: suppose that $\overline w(X-a),a \in\overline K$ were
maximal; choosing $c\in \aK$ with $v(c)=\overline w(X-a)$, which is possible
because $\Gamma_{\overline w}=\Gamma_{\overline v}=v(\aK^{\times})$, the residue
of $(X-a)/c$ lies in the residue field of $\overline W$, which is $\ok$, 
so it is the residue of some $b\in\aV$, and then $\overline w(X-a-bc)>\overline w(X-a)$, a contradiction. 
Being countable and without maximum, $\Lambda$ has cofinality $\omega$.

We can now associate to $\{\lambda_n\}$ a sequence of minimal pairs, as in \cite[p.~290, (21)]{APZAll}.
For $\lambda\in\Lambda$ set
$$d(\lambda)=\min\{[K(a):K]\mid a\in \aK,\ \overline w(X-a)\geq\lambda\},$$
which is a well-defined positive integer, and choose $t_m\in \aK$ with 
$\overline w(X-t_m)\geq\lambda_m$ and $[K(t_m):K]=d(\lambda_m)$. Since $\overline w(X-t_m)\geq\lambda_m$ and 
$\{\lambda_m\}$ is cofinal in $\Lambda$, the family $\{\overline w(X-t_m)\}_{m\in\N}$ is cofinal in $\Lambda$, as well; 
we may therefore choose indices $m(1)<m(2)<\cdots$ for which
\begin{equation}\label{X pseudo-limit}
    s_n:=t_{m(n)},\qquad \delta_n:=\overline w(X-s_n)
\end{equation}
give a strictly increasing sequence $\{\delta_n\}_{n\in\N}$, which is also cofinal
in $\Lambda$ as well. By construction
\begin{equation}\label{eq:minimal}
[K(s_n):K]=d(\lambda_{m(n)})\qquad\text{and}\qquad \lambda_{m(n)}\leq\delta_n .
\end{equation}

Each $(s_n,\delta_n)$ is a \emph{minimal pair} in the sense of \cite{APZAll}:
if $b\in \aK$ satisfies $[K(b):K]<[K(s_n):K]$ and $v(s_n-b)\geq\delta_n$, then
$$\overline w(X-b)\geq\min\{\overline w(X-s_n),v(s_n-b)\}\geq\delta_n
\geq\lambda_{m(n)}$$
by \eqref{eq:minimal}, so that $b$ competes in the definition of 
$d(\lambda_{m(n)})$ with a degree smaller than that of $s_n$, a contradiction.
We stress that the minimality must be taken with respect to the inequality
$\overline w(X-a)\geq\lambda_{m(n)}$, and not with respect to the equality
$\overline w(X-a)=\delta_n$: an element $b$ of smaller degree with
$\overline w(X-b)>\delta_n$ is not excluded by the latter, the set $\Lambda$ having
no maximum. This matters because the definition of a minimal pair $(a,\delta)$
itself involves an inequality, namely $[K(a):K]\leq[K(b):K]$ for every $b$ with
$v(b-a)\geq\delta$; the selection by the equality
$\overline w(X-a)=\delta$ is the one found in the literature (see
\cite[p.~290, (21)]{APZAll} and the proof of \cite[Corollary 3.2]{oke13}), and it does not
by itself yield a minimal pair.
From
$$v(s_{n+1}-s_n)=\overline w\big((X-s_n)-(X-s_{n+1})\big)
=\min\{\delta_n,\delta_{n+1}\}=\delta_n$$
it follows that $E=\{s_n\}_{n\in\N}$ is a pseudo-convergent sequence of $\aK$.
It is of \emph{transcendental type} over $\aK$ because $E$ does not have pseudo-limits in $\aK$ 
(this follows from \cite{Kap}, see also the comments on \cite[p. 7962]{PS20}): if $a\in\aK$ were a pseudo-limit of $E$, 
that is, if $v(a-s_n)=\delta_n$ for every $n$, then $\overline w(X-a)\geq\min\{\overline w(X-s_n),v(s_n-a)\}=\delta_n$ 
for every $n$, so that the element $\overline w(X-a)$ of $\Lambda$ would be an upper bound 
for a cofinal sequence of $\Lambda$, that is, a maximum of $\Lambda$, which we have just excluded. 
Furthermore,
$$\aV_E=\{\phi\in\aK(X)\mid \phi(s_n)\in\aV \text{ for all sufficiently large }n\in\N\}$$
is a valuation domain of $\aK(X)$ \cite[Theorem 2]{Kap}, \cite[Remark 4.2]{APZAll}, which is equal to $\overline W$. 
This follows immediately from \cite[Theorem 2]{Kap} since $X$ is a pseudo-limit of $E$ 
by \eqref{X pseudo-limit}, for the sake of the reader we give a short and direct argument: clearly,
it suffices to show that $\overline w(f)=v(f(s_n))$ 
for every non-zero $f\in\aK[X]$ and every large $n$. Write
$f=c\prod_{i}(X-r_i)$ with $c,r_i\in\aK$.  Each $\overline w(X-r_i)$ lies in $\Lambda$, 
so that  $\delta_n>\overline w(X-r_i)$ for $n$ large and for every $i$; 
for such $n$'s and $i$'s, 
we have $\overline w(X-r_i)=v(s_n-r_i)$. 
Summing the values yields $\overline w(f)=v(f(s_n))$ for every $n\in\N$ large enough, 
and the same equality extends to $\aK(X)$ by multiplicativity. Since $\phi\in\aV_E$ 
means precisely that $v(\phi(s_n))\geq0$ for $n$
large, the claim follows. Finally $W=\overline W\cap K(X)$.

\vspace{.2cm}
\noindent
{\em The denominators of the $\delta_n$ are bounded.}
For each $n\in\N$ let $v_n$ be the restriction of $v$ to $K(s_n)$ and let
$\Gamma_n,k_n$ be its value group and residue field, respectively. Since $W$ is residually
algebraic over $V$ and $[\Gamma_w:\Gamma_v]=e<\infty$, it is a residually algebraic
torsion extension of $V$, and $\{(s_n,\delta_n)\}$ is a sequence of minimal pairs
defining $w$ in the sense of \cite{APZAll}. The result we use is \cite[Theorem 2.3~b),~c)]{APZAll}, 
whose hypotheses are three.
First, the $(s_n,\delta_n)$ must be minimal pairs of definition of the $\overline
w_n:=\overline w_{s_n,\delta_n}$, which we have just verified. Secondly,
$\overline w=\sup_n\overline w_n$, which holds by \cite[Theorem 4.1]{APZAll}, since
$\overline w(X-s_n)=\delta_n$ and $\{\delta_n\}$ is cofinal in $\Lambda$. Thirdly,
$\overline w$ must not be a residually transcendental extension of $\overline v$; this is
clear, $\overline W$ being an immediate extension of $\aV$, so that $k_{\overline w}=\ok$.
The same statement, for several indeterminates and for systems indexed by an arbitrary
well-ordered set without last element, is \cite[Theorem 3.1]{oke13}; the present case is that
of one indeterminate, with $I=\N$. Both are proved for an arbitrary valuation of an arbitrary
field, with no separability and no characteristic assumption. In our notation it reads
\begin{align*}
n<m\Rightarrow \Gamma_n\subseteq\Gamma_m,\,\,k_n\subseteq k_m,\\
\Gamma_w=\bigcup_{n\in\N}\Gamma_n,\;\; k_w=\bigcup_{n\in\N}k_n;
\end{align*}
of these we shall only need the first line and the equality
$\Gamma_w=\bigcup_n\Gamma_n$.
We stress that we do not appeal to \cite[Theorem 5.1]{APZAll}, nor to
\cite[Corollary 3.2]{oke13}: in both, the system of minimal pairs is produced by the
selection under the equality $\overline w(X-a)=\delta$ discussed above, which does not by
itself yield a minimal pair, whereas the hypothesis of \cite[Theorem 2.3]{APZAll} and of
\cite[Theorem 3.1]{oke13} is that the pairs be minimal. The construction of
$\{(s_n,\delta_n)\}$ carried out above supplies exactly that hypothesis.
Since $W$ is a DVR, $\Gamma_w\cong\Z$, so there exists $n_0\in\N$ such that
$\Gamma_n=\Gamma_w$ for every $n\geq n_0$; in particular,
$e_{s_n}=[\Gamma_n:\Gamma_v]=e$ for every $n\geq n_0$, in the notation of
Section \ref{sec:eps}.
 Let $n\geq n_0$. The
element $\delta_n=v(s_{n+1}-s_n)$ lies in the value group of the restriction
$v_n'$ of $v$ to $K(s_n,s_{n+1})$. We apply Lemma \ref{lem:ram} with $F=K$,
$F_1=K(s_n)$ and $F_2=K(s_{n+1})$; its hypotheses hold, since $K$ is complete with respect
to a discrete valuation of rank one and the residue field extension of $K(s_n)\mid K$ is
separable, $k_v$ being perfect. Note that no separability is required of $K(s_n)\mid K$
itself, and, what is essential, no tameness: the extensions $K(s_n)\mid K$ might be wildly
ramified in general, which is why the lemma had to be proved without a tameness assumption.
It gives $e\big(K(s_n,s_{n+1})\mid K(s_{n+1})\big)\leq
e\big(K(s_n)\mid K\big)=e_{s_n}=e$; therefore
$$e(v_n'\mid v)=e\big(K(s_n,s_{n+1})\mid K(s_{n+1})\big)\cdot
e\big(K(s_{n+1})\mid K\big)\leq e\cdot e=e^2 .$$

Put $N=(e^2)!$. The group $\Gamma_{v_n'}$ is a subgroup of
$\Gamma_{\overline v}\cong\Q$ containing $\Gamma_v\cong\Z$ with index $m_n\leq e^2$, so that
$m_n\Gamma_{v_n'}\subseteq\Gamma_v$; as $m_n$ divides $N$, this gives
$\Gamma_{v_n'}\subseteq\frac1N\Gamma_v$ and in particular
$\delta_n\in\frac1N\Gamma_v$ for every $n\geq n_0$. The
strictly increasing sequence $\{\delta_n\}_{n\geq n_0}$ thus lies in the
infinite cyclic group $\frac1N\Gamma_v$, so it is unbounded, that is, cofinal in
$\Gamma_{\overline v}$. (We do not know whether the sharper bound
$\delta_n\in\frac1e\Gamma_v$ holds; only boundedness of the denominators is
needed here.)

\vspace{.2cm}
\noindent
{\em Conclusion.}
Since $v(s_{n+1}-s_n)=\delta_n$ tends to infinity, $E$ is a Cauchy sequence and
converges to a unique $\alpha\in\bK$; moreover $\alpha\notin\aK$, since a
pseudo-limit of $E$ in $\aK$ has been excluded above 
(i.e., $E$ is of transcendental type). For every non-zero
$\phi\in \aK(X)$ we have $\phi(s_n)\to\phi(\alpha)$ and $v(\phi(s_n))$ is
eventually constant, so that $v(\phi(\alpha))=v(\phi(s_n))$ for $n$ large;
therefore
$$\overline W=\oV_E=\{\phi\in \aK(X)\mid v(\phi(\alpha))\geq0\} =\oV_{\alpha},$$
whence $W=\overline W\cap K(X)=V_{\alpha}$. Going back to the initial setting, 
where $K$ is not necessarily complete,
this reads $\widetilde W=\{\phi\in\K(X)\mid v(\phi(\alpha))\geq0\}$ and therefore
$$W=\widetilde W\cap K(X)=\{\phi\in K(X)\mid v(\phi(\alpha))\geq0\}=V_{\alpha},$$
with $\alpha\in\bK\setminus\oK$; in particular $\alpha$ is transcendental over $K$. 
The sequence $\{s_n\} \subset\oK$ constructed above satisfies $\alpha=\lim_ns_n$ 
and $e_{s_n} =e=[\Gamma_w:\Gamma_v]$ for $n\geq n_0$, which is assertion (3).

It remains to prove assertion (2). If $[k_w:k_v]<\infty$, 
the first part of the proof gives some $\alpha_0\in\oK$ with $W=V_{\alpha_0}$, and no
$\beta\in\bK\setminus\oK$ can satisfy $W=V_{\beta}$, by Corollary \ref{lem:rank};
hence every $\alpha$ with $W=V_{\alpha}$ lies in $\oK$. If $[k_w:k_v]=\infty$ and
$\alpha\in\oK$ satisfied $W=V_{\alpha}$, then by Lemma \ref{lem:completion} the
completion of $K(X)$ with respect to $w$ would be $C_{\alpha}=\K(\alpha)$, a finite extension
of $\K$; completions being immediate, $k_w$ would be the residue field of $\K(\alpha)$ and
hence finite over $k_v$, a contradiction; hence every $\alpha$ with $W=V_{\alpha}$ lies in
$\bK\setminus\oK$.
\end{proof}

By Remark \ref{rem:Valpha}, for every $\alpha\in\bK$ transcendental over $K$, 
the valuation domain $V_{\alpha}$ is a residually algebraic torsion extension of $V$, 
and Theorem \ref{thm:main} says that the class of valuation domains $\{V_{\alpha}\mid \alpha\in\bK,\alpha \text{  transcendental over }K\}$ 
and the class of residually algebraic torsion extensions of $V$ have the same discrete members 
(i.e., DVRs); note that the former class is contained in the latter by Remark \ref{rem:Valpha}. These two classes
are not equal, however, and they fail to be so in both directions: not every $V_{\alpha}$ is discrete 
(Example \ref{ex:nondiscrete}), and not every residually algebraic torsion extension of $V$ is a $V_{\alpha}$ 
(Example \ref{prop:notValpha}).

\begin{remark}\label{rem:uniqueness}
The element $\alpha$ of Theorem \ref{thm:main} is unique up to the action of $G$, and in
particular the alternative in assertion (2) of the theorem does not depend on the
representative.
\end{remark}

\begin{proof}
Let $\alpha\in\bK$ be transcendental over
$K$ and let $\sigma\in G$, extended by continuity to $\bK$ as in (G2). Since
$\sigma$ fixes $K$ and $v\circ\sigma=v$, we have
$v(\phi(\sigma(\alpha)))=v(\sigma(\phi(\alpha)))=v(\phi(\alpha))$ for every
$\phi\in K(X)$, so that $V_{\sigma(\alpha)}=V_{\alpha}$. Conversely, let
$\alpha,\beta\in\bK$ be transcendental over $K$ with $V_{\alpha}=V_{\beta}$; by
Corollary \ref{lem:rank}, either both of them lie in $\oK$ or neither does. In the
first case $\alpha$ and $\beta$ are conjugate over $\K$ by
\cite[Theorem 3.2]{PerTransc}, hence $\beta=\sigma(\alpha)$ for some $\sigma\in G$
by (G1).

In the second case we first reduce to $K=\K$; we keep the notation $C_{\gamma}$ of
Lemma \ref{lem:completion}. Let $\widehat{K(X)}$ be the completion of
$K(X)$ with respect to the valuation associated with $W:=V_{\alpha}=V_{\beta}$, let
$\widehat W$ be its valuation ring, and identify $\K$ with the closure of $K$ in
it. Since $\alpha\notin\oK$, Lemma \ref{lem:completion}
gives $[\widehat{K(X)}:\K]=[C_{\alpha}:\K]=\infty$; hence $X$ is
transcendental over $\K$, for otherwise $\K(X)$ would be finite over $\K$, hence
complete, and, containing the dense subfield $K(X)$, would coincide with
$\widehat{K(X)}$. The subfield $\K(X)$ of $\widehat{K(X)}$ and the valuation
domain $\widetilde W=\widehat W\cap\K(X)$ are therefore defined intrinsically in
terms of $W$, while the identification of $\widehat{K(X)}$ with $C_{\alpha}$
(resp., $C_{\beta}$), carries $\K(X)$ onto $\K(\alpha)$ (resp., $\K(\beta)$), by
evaluation; hence
$$\{\phi\in\K(X)\mid v(\phi(\alpha))\geq0\}=\widetilde W=
\{\phi\in\K(X)\mid v(\phi(\beta))\geq0\},$$
which is the hypothesis with $\K$ in place of $K$.

Assume then $K=\K$. By Remark \ref{rem:conj} the rings $\overline{V_{\alpha}}$ and
$\overline{V_{\beta}}$ are two extensions of $V_{\alpha}$ to $\oK(X)$, hence
conjugate under $\Aut(\oK(X)\mid K(X))=G$; combining with \eqref{eq:conj} we get
$\overline{V_{\beta}}=\sigma(\overline{V_{\alpha}})
=\overline{V_{\sigma(\alpha)}}$ for some $\sigma\in G$.
Writing $\gamma=\sigma(\alpha)$, the equality of these two valuation domains gives
$v(\beta-s)=v(\gamma-s)$ for every $s\in\oK$, both sides being the value of $X-s$.
Since $\oK$ is dense in $\bK$ we may choose $s\in\oK$ with $v(\beta-s)$
arbitrarily large, and then
$v(\beta-\gamma)\geq\min\{v(\beta-s),v(\gamma-s)\}=v(\beta-s)$ is arbitrarily
large; hence $\beta=\gamma=\sigma(\alpha)$.
\end{proof}

\subsection{\texorpdfstring{When each of the two cases occurs}{When each of the two cases occurs}}\label{subsec:cases}

The first case of Theorem \ref{thm:main} is governed by the completeness of $K$
and the second by the size of $\ok$ over $k_v$; both can fail at once, and the
class is then empty.  Examples \ref{ex:CC} and \ref{ex:Qps} show that neither of
the two hypotheses of Corollary \ref{cor:vacuous} can be dropped.

\begin{proposition}\label{rem:byproduct}
Let $V$, $K$ and $W$ be as in Theorem \ref{thm:main} and read $X$ inside the completion
$\widehat{K(X)}$ of $K(X)$, which contains a canonical copy of $\K$. Then
$$X \text{ is algebraic over } \K \iff [k_w:k_v]<\infty .$$
In particular, if $K$ is complete, then $[k_w:k_v]=\infty$ for every such $W$, and hence
$\alpha\notin\oK$ for every $\alpha$ with $W=V_{\alpha}$.
\end{proposition}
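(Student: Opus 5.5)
Both implications can be read off from arguments already given in the proof of Theorem \ref{thm:main}; we isolate them here. Throughout, $L=\widehat{K(X)}$ is the completion of $K(X)$ with respect to $w$, and $\K$ is identified with the closure of $K$ in $L$. Completions are immediate, so $e(L\mid\K)=e=[\Gamma_w:\Gamma_v]<\infty$ and the residue field of $L$ is $k_w$.

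\emph{($\Leftarrow$)} Suppose $[k_w:k_v]=f<\infty$. We argue as in the first part of the proof of Theorem \ref{thm:main}. Both $L$ and $\K$ are complete with respect to discrete valuations, and $e(L\mid\K)$ and $f(L\mid\K)$ are finite. Successive approximation then shows that $O_L$ is spanned over $O_{\K}$ by the products $\omega_i\varpi^j$, where the $\omega_i$ lift a basis of $k_w$ over $k_v$, $\varpi$ is a uniformizer of $L$, and $0\le j<e$. Hence $[L:\K]\le ef<\infty$, and in particular $X\in L$ is algebraic over $\K$.

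\emph{($\Rightarrow$)} Suppose $X$ is algebraic over $\K$. Then $\K(X)\subseteq L$ is a finite extension of the complete field $\K$, so it is complete and therefore closed in $L$. It contains the dense subfield $K(X)$, so $\K(X)=L$. The residue field $k_w$ of $L$ is then the residue field of a finite extension of $\K$, and $f(\K(X)\mid\K)\le[\K(X):\K]<\infty$, that is, $[k_w:k_v]<\infty$. This is exactly the argument used in the reduction step of the proof of Theorem \ref{thm:main}, where it served to show that $X$ is transcendental over $\K$.

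\emph{The case $K$ complete.} If $K=\K$, then $X$, which is transcendental over $K$ inside $K(X)\subseteq L$, is transcendental over $\K$. By the equivalence just proved, $[k_w:k_v]=\infty$, and Theorem \ref{thm:main}(2) then gives $\alpha\notin\oK$ for every $\alpha$ with $W=V_\alpha$. The only delicate point in the whole argument is the finiteness $[L:\K]\le ef$ in the direction ($\Leftarrow$). It requires completeness of $L$ (to sum the successive approximations) together with discreteness (so that $e$ is finite); we will cite the argument already given in Theorem \ref{thm:main} for it rather than redo it.
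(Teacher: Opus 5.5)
Your proof is correct and is essentially the paper's. The direction ($\Rightarrow$) and the complete case are exactly the reduction-step argument and closing remark the paper uses. For ($\Leftarrow$), the paper routes through Theorem \ref{thm:main}\,(1),(2), writing $W=V_{\alpha}$ with $\alpha\in\oK$ and identifying $\widehat{K(X)}$ with $\K(\alpha)$; you instead inline the bound $[L:\K]\le ef$ on which that case of the theorem rests, which is a little more direct.

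One small correction to your closing comment: the successive approximation needs completeness of $\K$, since the approximating coefficients are summed in $O_{\K}$. For $L$ only separatedness, $\bigcap_k\pi^kO_L=0$, is used.
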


\begin{proof}
If $[k_w:k_v]<\infty$, then $W=V_{\alpha}$ for some $\alpha\in\oK$ by Theorem
\ref{thm:main}\,(1) and (2), and $\phi\mapsto\phi(\alpha)$ is an isomorphism of valued
fields $K(X)\to K(\alpha)$;
it therefore identifies $\widehat{K(X)}$ with the completion $\K(\alpha)$ of
$K(\alpha)$, which is a finite extension of $\K$, and it carries $X$ to $\alpha$,
an element algebraic over $\K$. Conversely, if $[k_w:k_v]=\infty$, the reduction
step in the proof of Theorem \ref{thm:main} shows that $X$ is transcendental over
$\K$. For the last assertion, note that if $K=\K$ then $X$ is transcendental over
$\K$ by definition.
\end{proof}

\begin{corollary}\label{cor:vacuous}
Let $V$ be a complete DVR whose residue field is algebraically closed or real closed. Then
there is no extension of $V$ to $K(X)$ which is a DVR and residually algebraic
over $V$.
\end{corollary}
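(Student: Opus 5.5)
Suppose, for a contradiction, that $W$ is an extension of $V$ to $K(X)$ which is a DVR and residually algebraic over $V$. Since $V$ is complete, $K=\K$, and Proposition \ref{rem:byproduct} gives $[k_w:k_v]=\infty$. The plan is to show that this is impossible when $k_v$ is algebraically closed or real closed. In either case $k_v$ is perfect (a real closed field has characteristic zero), so Theorem \ref{thm:main} applies. It then suffices to observe that $k_w$ is algebraic over $k_v$, and hence embeds in an algebraic closure $\ok$ of $k_v$.

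If $k_v$ is algebraically closed, then $\ok=k_v$. Every algebraic extension of $k_v$ is therefore trivial, so $[k_w:k_v]=1<\infty$. If $k_v$ is real closed, then $\ok=k_v(\sqrt{-1})$ by the Artin--Schreier theorem, so $[\ok:k_v]=2$. Since $k_w$ embeds over $k_v$ into $\ok$, we get $[k_w:k_v]\leq2<\infty$.

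In both cases $[k_w:k_v]<\infty$, which contradicts $[k_w:k_v]=\infty$. Hence no such $W$ exists.

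The only point requiring care is that the inclusion $k_w\hookrightarrow\ok$ is legitimate. This holds because residual algebraicity means $k_w$ is algebraic over $k_v$, and any algebraic extension embeds into an algebraic closure. Alternatively, one can argue via Theorem \ref{thm:main}(1): there $k_w$ is the residue field of $\bO\cap K(\alpha)$, which lies in the residue field $\ok$ of $\bO$. No serious obstacle is expected; the substance is entirely in Proposition \ref{rem:byproduct}.
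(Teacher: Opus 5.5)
Your proposal is correct and is essentially the paper's own proof. Both arguments note that $k_v$ is perfect and invoke Proposition \ref{rem:byproduct} to force $[k_w:k_v]=\infty$ over the complete $K$. Both then contradict this by embedding $k_w$ into $\ok$ over $k_v$, where $[\ok:k_v]\leq2$ by Artin--Schreier.
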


\begin{proof}
Such a residue field $k_v$ is perfect and satisfies $[\ok:k_v]\leq2$
by the theorem of Artin and Schreier \cite[Satz~4]{AS27}
(or see \cite[Ch.~VI, Corollary 9.3]{Lang}). If $W$ were such
an extension, Proposition \ref{rem:byproduct} would give $[k_w:k_v]=\infty$, whereas $k_w$
embeds into $\ok$ over $k_v$ and hence $[k_w:k_v]\leq2$.
\end{proof}

Completeness cannot be omitted in Corollary \ref{cor:vacuous}: if $K\neq\K$, any $\alpha\in\oK$
transcendental over $K$ belongs to the class, its value group $v(K(\alpha)^{\times})$ lying
between $\Gamma_v\cong\Z$ and $v(\K(\alpha)^{\times})$, which contains $\Gamma_v$ with finite
index, so that $V_{\alpha}$ is a DVR; and it is residually algebraic over $V$
by Remark \ref{rem:Valpha}.

\begin{remark}\label{rem:byproductdisc}
{\em That alternative (2) of Theorem \ref{thm:main} degenerates over a complete $K$ is
consistent with the fact that no element of $\oK$ is then transcendental over $K$, and it
explains why the reduction step of the proof is needed only when $[k_w:k_v]=\infty$.
Proposition \ref{rem:byproduct} also
refines the observation contained in the proof of
\cite[Proposition 2.24]{p25} in the case $K=\Q$. For the provenance of the
dichotomy itself, see Remark \ref{rem:IZ}.} \eoe
\end{remark}

The case $[k_w:k_v]=\infty$ of Theorem \ref{thm:main} is not empty, and this is so for every DVR whose residue field
admits finite extensions of arbitrary large degree. In particular, it is not empty in equal characteristic $p$,
where $\oK\mid\K$ need not be separable.

\begin{proposition}\label{prop:nonvacuous}
Let $V$ be a DVR with perfect residue field $k_v$. The following
statements are equivalent.
\begin{enumerate}
\item $[\ok:k_v]=\infty$.
\item There is $\alpha\in\bK\setminus\oK$ --- necessarily transcendental over $K$, so that
$V_{\alpha}$ is defined --- with $V_{\alpha}$ a DVR.
\item There is an extension $W$ of $V$ to $K(X)$ which is a DVR and
residually algebraic over $V$, with $[k_w:k_v]=\infty$.
\end{enumerate}
These conditions hold precisely when $k_v$ is neither algebraically closed nor real closed; in
particular they hold whenever $k_v$ is finite, and whenever $k_v$ is a number field.
\end{proposition}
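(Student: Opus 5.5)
The plan is to prove the cycle (2)$\Rightarrow$(3)$\Rightarrow$(1)$\Rightarrow$(2), and then derive the final sentence from Artin--Schreier.

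\emph{(2)$\Rightarrow$(3).} Take $W=V_{\alpha}$. It is a DVR, and it is residually algebraic by Remark~\ref{rem:Valpha}. Since $\alpha\notin\oK$, Theorem~\ref{thm:main}\,(2) gives $[k_w:k_v]=\infty$.

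\emph{(3)$\Rightarrow$(1).} Here $k_w$ embeds over $k_v$ into $\ok$, so $[\ok:k_v]\geq[k_w:k_v]=\infty$.

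\emph{(1)$\Rightarrow$(2).} This is the substantive direction, and I would build $\alpha$ as the limit of a Cauchy sequence of unramified elements. Since $k_v$ is perfect and $[\ok:k_v]=\infty$, I choose a strictly increasing tower of finite separable extensions $k_v=\ell_0\subsetneq\ell_1\subsetneq\cdots$ inside $\ok$. Step~1 of Lemma~\ref{lem:ram} (Hensel in $\K$) provides unramified extensions $\K\subseteq U_1\subseteq U_2\subseteq\cdots$ in $\oK$ with residue fields $\ell_n$, and $U_n=\K(\theta_n)$ with $\theta_n$ a unit. I then set
$$\alpha=\sum_{n\geq1}\theta_n\pi^{n},$$
after first adjusting the $\theta_n$ so that $\ell_n=\ell_{n-1}(\overline{\theta_n})$ and $\overline{\theta_n}\notin\ell_{n-1}$. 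The series converges in $\bK$.

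The partial sums $s_N$ lie in $U_N$, which is unramified over $\K$, so every finite-level value group equals $\Gamma_v$. The value group of $\K(\alpha)$ is contained in the union of the value groups of the $\K(s_N)$, because $v(\phi(\alpha))=v(\phi(s_N))$ for $N$ large, exactly as in the Conclusion step of Theorem~\ref{thm:main}. Hence $v(K(\alpha)^\times)=\Gamma_v$ and $V_\alpha$ is a DVR. This is the ``bounded ramification'' mechanism, with bound $1$.

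It remains to show $\alpha\notin\oK$, and I expect this to be the main obstacle. Suppose $\alpha$ were algebraic, say $[\K(\alpha):\K]=d$. The residue field of $\K(\alpha)$ then has degree at most $d$ over $k_v$. I would show it contains every $\ell_n$, which contradicts the strict growth of the tower. The idea is to recover the residues $\overline{\theta_n}$ successively: $\overline{\theta_1}$ is the residue of $\alpha/\pi$, then $\overline{\theta_2}$ is the residue of $(\alpha/\pi-\theta_1)/\pi$, and so on. The difficulty is that $\theta_1$ itself need not lie in $\K(\alpha)$.

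To handle this I would use that $\K(\alpha)$ is henselian with perfect residue field. If $\overline{\theta_1}$ lies in the residue field of $\K(\alpha)$, Hensel's lemma gives the unique unramified lift there, so I should take the $\theta_n$ to be canonical lifts. Concretely, I would choose $\theta_n$ as roots of monic lifts of separable minimal polynomials, making $U_n$ the unique unramified extension with residue field $\ell_n$. Then $U_n\subseteq\K(\alpha)$ as soon as $\ell_n$ lies in its residue field, and the induction proceeds. An alternative is to bound the degree using Lemma~\ref{lem:completion} together with Krasner's lemma. The first route seems cleaner.

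\emph{Final sentence.} If $k_v$ is neither algebraically closed nor real closed, Artin--Schreier \cite[Satz~4]{AS27} shows $[\ok:k_v]$ is not $\leq2$, so it is infinite. Conversely, if $k_v$ is algebraically closed or real closed then $[\ok:k_v]\leq2$. Finite fields and number fields admit extensions of every degree, so the conditions hold for them.
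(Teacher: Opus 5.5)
Your proof is correct, and its skeleton is the paper's: the same cycle of implications, the same kind of element $\alpha=\sum_n(\text{unramified unit})\,\pi^n$, the same value-group argument for discreteness (the paper's Claim~2), and Artin--Schreier for the last sentence. You genuinely differ only in showing $\alpha\notin\oK$.

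The paper takes arbitrary lifts $a_n$ of residues whose degrees grow fast, $[k_v(\overline{a_n}):k_v]>(n-1)D_{n-1}$. Assuming $\alpha\in\oK$, it first gets $e(\K(\alpha)\mid\K)=1$, because $\alpha$ lies in the closure of $\K^{\ur}$. So $\K(\alpha)$ is unramified and sits in a finite unramified $M$ of residue degree $c$. The paper then locates $\overline{a_{N+1}}$, the residue of $(\alpha-s_N)/\pi^{N+1}$, in the residue field of $M\K(a_1,\ldots,a_N)$, which has degree at most $cD_N$. Since its lifts are not canonical, the paper cannot place $a_1,\ldots,a_N$ inside $\K(\alpha)$ itself, and the growth condition is exactly what compensates for this.

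You instead rigidify the lifts: $\theta_n$ is a root of a monic lift $g_n$ of the minimal polynomial of $\overline{\theta_n}$ over $k_v$. Because $\overline{g_n}$ is separable, reduction is injective on the roots of $g_n$ in $\oK$. Hence the root that Hensel's lemma produces in the complete field $\K(\alpha)$ must be $\theta_n$ itself. State this uniqueness step explicitly: it is what turns ``$\overline{\theta_n}$ lies in the residue field of $\K(\alpha)$'' into ``$\theta_n\in\K(\alpha)$'' and drives your digit-by-digit induction. Also take $\overline{\theta_n}$ to be a primitive element of $\ell_n\mid k_v$. Then $U_n=\K(\theta_n)$ really has residue field $\ell_n$, and the same uniqueness gives $U_n\subseteq U_{n+1}$.

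Your route buys a weaker requirement, since any strictly increasing tower of residue fields will do with no degree bookkeeping. It also skips the preliminary step showing that $\K(\alpha)$ would be unramified. The paper's route buys complete freedom in the choice of the lifts, at the price of the growth condition.
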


\begin{proof}
$(1)\Rightarrow(2)$. Normalize $v$ so that $v(\pi)=1$, where $\pi$ is the generator of the maximal ideal
of $V$ fixed in Section \ref{sec:notation}; recall that $\pi$ is also a uniformizer of
$\widehat V$. Denote by $\K^{\mathrm{ur}}\subseteq\oK$ the maximal unramified extension of
$\K$, that is, the union of the finite unramified extensions of $\K$ inside
$\oK$; its value group is $\Gamma_v$ and its residue field is $\ok$.

Since $k_v$ is perfect, $\ok\mid k_v$ is separable, and $[\ok:k_v]=\infty$; hence
$\ok\mid k_v$ has finite subextensions of arbitrarily large degree, each of them
simple. Choose inductively elements $\overline{a_n}\in\ok$, $n\geq1$, with
$$[k_v(\overline{a_n}):k_v]>(n-1)\cdot D_{n-1},\qquad
D_{n}:=[k_v(\overline{a_1},\ldots,\overline{a_n}):k_v]$$
(and $D_0=1$). Each $\overline{a_n}$ is separable over $k_v$, so there is a finite
unramified extension $M^{(n)}$ of $\K$ inside $\oK$ with residue field
$k_v(\overline{a_n})$, and the residue map $O_{M^{(n)}}\to k_v(\overline{a_n})$ is
surjective: choose $a_n\in O_{M^{(n)}}$ lifting $\overline{a_n}$. Then
$a_n\in\K^{\mathrm{ur}}$, and $a_n$ is a unit, $\overline{a_n}$ being non-zero.
Put
$$s_N=\sum_{n=1}^{N}a_n\pi^{n},\qquad \alpha=\sum_{n=1}^{\infty}a_n\pi^{n},$$
the series converging in $\bK$ because $v(a_n\pi^{n})=n$; thus
$s_N\in\K(a_1,\ldots,a_N)\subseteq\K^{\mathrm{ur}}$ and $v(\alpha-s_N)=N+1$,
the field $\K(a_1,\ldots,a_N)$ being unramified over $\K$ as a compositum of
finitely many unramified extensions.

\vspace{.2cm}
\noindent
{\bf Claim 1.} $\alpha\notin\oK$. The sequence $\{s_N\}$ is Cauchy and contained
in $\K^{\mathrm{ur}}$, so $\alpha$ lies in the closure of $\K^{\mathrm{ur}}$ in $\bK$, whose
value group is again $\Gamma_v$. Suppose $\alpha\in\oK$. Then
$e(\K(\alpha)\mid\K)=1$; the residue field extension of $\K(\alpha)\mid\K$ is
separable, $k_v$ being perfect; hence $\K(\alpha)\mid\K$ is unramified and
$\alpha\in\K^{\mathrm{ur}}$. Let then $M$ be a finite unramified extension of $\K$
containing $\alpha$ and put $c=[k_M:k_v]$. For every $N$ the field
$M_N=M\K(a_1,\ldots,a_N)$ is unramified over $\K$ with residue field
$k_M\cdot k_v(\overline{a_1},\ldots,\overline{a_N})$, of degree at most $c\,D_N$
over $k_v$, and it contains both $\alpha$ and $s_N$. Hence
$(\alpha-s_N)/\pi^{N+1}$ is a unit of $M_N$, and its residue is
$\overline{a_{N+1}}$, so that
$$[k_v(\overline{a_{N+1}}):k_v]\leq c\,D_N .$$
For $N\geq c$ this contradicts the choice $[k_v(\overline{a_{N+1}}):k_v]>N\,D_N$.
Therefore $\alpha\notin\oK$, and $\alpha$ is transcendental over $K$, every
element of $\bK$ algebraic over $K$ being algebraic over $\K$ and hence lying in
$\oK$. In particular $V_{\alpha}$ is defined.

\vspace{.2cm}
\noindent
{\bf Claim 2.} {\em $V_{\alpha}$ is a DVR.} 
Since $\alpha$ lies in the completion of $\K^{\ur}$, the proof follows 
at once from Proposition \ref{prop:criterion} below; we give here a different and direct argument.
Each $s_N$ lies in the unramified
extension $\K(a_1,\ldots,a_N)$ of $\K$, whose value group is $\Gamma_v$; hence
$v(f(s_N))\in\Gamma_v$ for every $f\in K[X]$. Since $s_N\to\alpha$ and
$f(\alpha)\neq0$ for $f\neq0$, we have $f(s_N)\to f(\alpha)$ and therefore
$v(f(\alpha))=v(f(s_N))\in\Gamma_v$ for $N$ large. The value group
$v(K(\alpha)^{\times})$ of $V_{\alpha}$ is thus contained in $\Gamma_v$ and
contains it, so it equals $\Gamma_v\cong\Z$; hence $V_{\alpha}$ is a DVR.

$(2)\Rightarrow(3)$. Put $W=V_{\alpha}$. It is residually algebraic over $V$ by
Remark \ref{rem:Valpha} and discrete of rank one by hypothesis, and $[k_w:k_v]=\infty$ by
assertion (2) of Theorem \ref{thm:main}, since $\alpha\notin\oK$.

$(3)\Rightarrow(1)$. The residue field $k_w$ is algebraic over $k_v$, hence admits a
$k_v$-embedding into $\ok$; therefore $[\ok:k_v]\geq[k_w:k_v]=\infty$.

Finally, by the theorem of Artin and Schreier \cite[Satz~4]{AS27}
(or see \cite[Ch.~VI, Corollary 9.3]{Lang}), $[\ok:k_v]<\infty$ holds only when $\ok=k_v$ or $[\ok:k_v]=2$, the latter exactly when $k_v$ is real closed.
\end{proof}

The two cases that follow are instances of Proposition \ref{prop:nonvacuous}.
They bear on Corollary \ref{cor:vacuous} as well: in both of them $K$ is complete, so that
completeness alone does not empty the class, and what makes the class non-empty is that the
residue field $\mathbb{F}_p$ has an algebraic closure of infinite degree over it.

\begin{example}\label{ex:nonvacuous}
In the proof of Proposition \ref{prop:nonvacuous} one chooses elements
$\overline{a_n}\in\ok$ with $[k_v(\overline{a_n}):k_v]>(n-1)D_{n-1}$, where
$D_n=[k_v(\overline{a_1},\ldots,\overline{a_n}):k_v]$ and $D_0=1$. We give two instances in
which the $\overline{a_n}$ may be taken to generate $\mathbb{F}_{p^{n!}}$ over
$\mathbb{F}_p$; in both, $\zeta_n$ denotes a primitive $(p^{n!}-1)$-th root of unity in
$\oK$, so that $\zeta_n$ is a unit whose residue generates
$\mathbb{F}_{p^{n!}}$ (Section \ref{sec:notation}, roots of unity of order prime to
$p$; here $m=p^{n!}-1$).
\begin{itemize}
\item $V=\Z_p$, $K=\K=\Q_p$, $\oK=\overline{\Q_p}$, $\bK=\mathbb{C}_p$,
$\pi=p$:
$$\alpha=\sum_{n\geq1}\zeta_np^{n}\in\mathbb{C}_p .$$
\item $V=\mathbb{F}_p[[t]]$, $K=\K=\mathbb{F}_p((t))$, $\pi=t$:
$$\alpha=\sum_{n\geq1}\zeta_nt^{n}\in\bK .$$
Here $\K$ is not perfect, so that $\oK\mid\K$ is not separable and
$G=\Aut(\oK\mid\K)$ is not a Galois group: this is the situation which is not
covered by \cite[Corollary 2.28]{p25}.
\end{itemize}
In both cases $D_N=[\mathbb{F}_{p^{N!}}:\mathbb{F}_p]=N!$ and
$[k_v(\overline{a_{N+1}}):k_v]=(N+1)!>N\cdot N!$, as required.
\end{example}

\begin{remark}\label{rem:overrings}
{\em Theorem \ref{thm:main} applies in particular to the extensions $W$ of $V$ to
$K(X)$ which are DVRs containing the polynomial ring $V[X]$: each such $W$ that
is residually algebraic over $V$ equals $V_{\alpha}$ for an $\alpha\in\bK$
transcendental over $K$ with $v(\alpha)\geq0$, the inequality being equivalent to
$X\in W$. Comparing with the description recalled at the end of Section
\ref{sec:notation} and with \eqref{eq:trace}, such a $W$ is, 
when $\alpha\in\oK$, $\oV_{\alpha,\infty}\cap K(X)$, the rank two valuation domain $\oV_{\alpha,\infty}$ 
being the one of Section \ref{sec:monomial},
whereas the residually transcendental extensions of $V$ containing $V[X]$ are the contractions to $K(X)$
of the monomial valuation domains $\oV_{\alpha,\delta}$ with $\delta$ finite. For $V=\Z_{(p)}$
the valuation overrings of $V[X]$ lying over $V$ are
among the objects ordered, by means of MacLane's construction, by Loper and Tartarone
\cite{LT09} in their study of the integrally closed rings lying between $\Z[X]$ and
$\Q[X]$; Theorem \ref{thm:main} identifies those of them that are DVRs and
residually algebraic over $V$.}
\end{remark}

\subsection{\texorpdfstring{Two families of examples}{Two families of examples}}\label{subsec:families}

The two families that follow are the equal characteristic realizations of the
hypotheses of Theorem \ref{thm:main}; every object considered so far can be
written down explicitly in them, and we shall use them, here and in
\S\ref{sec:examples}, to illustrate the results of the paper.

\begin{example}\label{ex:families}
Let $F$ be a field and consider
\begin{itemize}
\item[(A)] $V=F[[t]]$, $K=F((t))$, with $v$ the $t$-adic valuation and $\pi=t$;
\item[(B)] $V=F[t]_{(f)}$, $K=F(t)$, with $f\in F[t]$ irreducible and $\pi=f$.
\end{itemize}
Then the residue field of $v$ is $k_v=F$ in case (A) and $k_v=k:=F[t]/(f)$ in case (B), so that in both
cases $k_v$ is perfect precisely when $F$ is; moreover
$$\text{(A)}\ \ \K=K,\qquad
\text{(B)}\ \ \widehat V\cong k[[z]],\ \ \K\cong k((z)),\ \ z=f(t),\ \
K\neq\K .$$
Thus the completion of (B) is an instance of (A), and the two families differ
exactly by completeness --- which, by Proposition \ref{rem:byproduct}, is what
decides between the two cases of Theorem \ref{thm:main}.  If $F$ is perfect,
writing $\kappa=k_v$ and $z$ for the uniformizer in either case, the maximal
unramified extension is 
$$\K^{\ur}=\bigcup_{\kappa'}\kappa'((z)),\qquad
\widehat{\K^{\ur}}=\overline{\kappa}((z)),$$
$\kappa'$ running over the finite extensions of $\kappa$ inside
$\overline\kappa$.
\end{example}

\begin{proof}
In case (B) the complete local ring $\widehat V$ contains a field, hence admits
a coefficient field $k\subseteq\widehat V$ by Cohen's structure theorem (\cite{Cohen}; for
$F$ perfect, $k\mid F$ is finite separable and one may instead lift a primitive
element by Hensel's lemma, as in Step 1 of the proof of Lemma \ref{lem:ram});
expanding in powers of $f$ then gives $\widehat V\cong k[[z]]$.  That
$K\neq\K$ follows from $k((z))$ having infinite transcendence degree over $k$,
whereas $K$ has transcendence degree $1$ over $F$ and $k\mid F$ is algebraic.

For the second display, $\kappa'((z))\mid\kappa((z))$ has $e=1$ 
(because the uniformizer $z$ is the same in both power series fields) and separable
residue field extension, hence is unramified; conversely a finite unramified
$M\mid\K$ has residue field some finite separable $\ell\mid\kappa$, and Step 1
of the proof of Lemma \ref{lem:ram} gives $\theta\in O_M$ with
$M=\K(\theta)\cong\ell((z))$, where the last isomorphism follows as above by Cohen's structure theorem.  
Finally the union is dense in the complete field
$\overline\kappa((z))$, the truncations of a series over $\overline\kappa$
having coefficients in a finite extension of $\kappa$.
\end{proof}

The two cases of Theorem \ref{thm:main} are distributed between the families in
the sharpest possible way, as the following pair of rings shows.

\begin{example}\label{ex:CC}
Take $F=\C$ and compare $V=\C[[t]]$, of type (A), with $V'=\C[t]_{(t)}$, of type
(B) with $f=t$; the two have the same residue field $\C$ and the same completion
$\C((t))$.  Then there is \emph{no} extension of $V$ to $\C((t))(X)$ which is a
DVR and residually algebraic over $V$, whereas the extensions of $V'$ to
$\C(t)(X)$ which are DVRs and residually algebraic over $V'$ are exactly the
$V_{\alpha}$ for the Puiseux series $\alpha$ over $\C$ transcendental over
$\C(t)$, all of them with $k_w=k_v=\C$.  For instance
$$\alpha=\sum_{n\geq1}t^{\,n!},\quad [\Gamma_w:\Gamma_v]=1,\qquad
\beta=\sum_{n\geq1}t^{\,n!/2},\quad [\Gamma_w:\Gamma_v]=2,$$
where $w$ is the valuation associated to $V_{\alpha},V_{\beta}$, respectively.
The first of them gives $W=\{\phi\in\C(t)(X)\mid\phi(\alpha)\in\C[[t]]\}$, the
construction over $\C[t]_{(t)}\subset\C[[t]]$ that corresponds, in equal
characteristic, to the one over $\Z_{(p)}\subset\Z_p$.
\end{example}

\begin{proof}
The first assertion is Corollary \ref{cor:vacuous}, the ring $\C[[t]]$ being
complete with algebraically closed residue field.  For the second, 
let $W$ be such an extension of $V'$  to $\C(t)(X)$; 
its residue field embeds into $\ok=\C$ over
$k_v=\C$, so that $k_w=\C$,  $[k_w:k_v]=1$, and Theorem \ref{thm:main} gives
$W=V_{\alpha}$ for an $\alpha\in\oK$ transcendental over $K$.  Conversely, every
such $\alpha$ yields a DVR, as observed after Corollary \ref{cor:vacuous}; and
$\oK$, the algebraic closure of $\C((t))$, is the field of Puiseux series over
$\C$ by the theorem of Newton and Puiseux (see for example \cite[Ch. IV, \S 2, Proposition 8]{Ser}).

 Let $a_n=1$ if $n=m!$ for some $m\ge1$ and $a_n=0$ otherwise, so that
$\alpha = \sum_{n\ge0}a_nt^n$.
Assume to the contrary that $\alpha$ is algebraic over
$\mathbb{C}(t)$. An algebraic power series is $D$-finite \cite[Theorem 6.4.6]{Sta},
so the sequence $\{a_n\}$ is $P$-recursive \cite[Proposition 6.4.3]{Sta} and, hence there are
 polynomials $p_0,\dots,p_r\in\mathbb{C}[y]$, for some $r> 0$, with $p_r\neq0$, such that
$p_r(n)a_{n+r}+\dots+p_0(n)a_n=0$ for every $n\ge0$. 
Let $N$ be an integer such that $p_r(n) \neq 0$ for all integers $n \geq N$
and choose $m$ with $m!\ge N$ and $(m+1)!-m!>r$. Then
$a_{m!+1}=\dots=a_{m!+r}=0$, and applying the recurrence successively at
$n=m!+1,m!+2,\dots$, where $p_r(n)\neq0$, gives $a_n=0$ for every $n>m!$,
against $a_{(m+1)!}=1$. Hence $\alpha$ is transcendental over $\mathbb{C}(t)$. 
The same argument shows that $\beta$ is transcendental over $\mathbb{C}(t^{1/2})$ and a fortiori over $\mathbb{C}(t)$.
\end{proof}

\begin{example}\label{ex:Qps}
The second bullet of Example \ref{ex:nonvacuous} is of type (A) of Example
\ref{ex:families}, with $F=\mathbb{F}_p$; the first is not, $\Z_p$ being of
mixed characteristic.  In characteristic zero one may take $V=\Q[[t]]$ and
$$\alpha=\sum_{n\geq1}2^{1/n!}\,t^{n}\in\bK\setminus\oK ,$$
the check being that of Example \ref{ex:nonvacuous}, since
$\Q(2^{1/1!},\ldots,2^{1/N!})=\Q(2^{1/N!})$ has degree $N!$ over $\Q$.  Here
$\Gamma_w=\Gamma_v=\Z$, and $[k_w:k_v]=\infty$ by Theorem \ref{thm:main}\,(2),
$\alpha$ not lying in $\oK$.
\end{example}

Taken together the two examples delimit Corollary \ref{cor:vacuous} from both
sides, and independently of one another: Example \ref{ex:CC} shows that the
hypothesis of completeness cannot be dropped, the class over $\C[t]_{(t)}$
being infinite while the one over the complete $\C[[t]]$ is empty, and Example
\ref{ex:Qps} shows the same of the hypothesis on the residue field, over a base which is complete.

\section{The hypothesis of discreteness}\label{sec:examples}

We now establish the two failures recorded after the proof of Theorem
\ref{thm:main}, in the following two examples.
For $\alpha\in\oK$ transcendental over $K$, the domain $V_{\alpha}$ is a DVR, as
was observed after Corollary \ref{cor:vacuous}; for $\alpha\in\bK\setminus\oK$, that argument
breaks down, and discreteness may indeed fail; about this fact see also \cite[Remark 2.18 \& Theorem 2.26]{p25}.
The example that follows exhibits, over
$\Q_p$, an $\alpha$ for which the value group of $V_{\alpha}$ contains elements of
arbitrarily large denominator and is therefore not cyclic.

\begin{example}\label{ex:nondiscrete}
Let $p \in\Z$ be a prime, $V=\Z_p$, $K=\Q_p$, so that $\K=K$, $\oK=\overline{\Q_p}$ and
$\bK=\mathbb{C}_p$, $\pi=p$, and normalize $v$ so that $v(p)=1$. Fix a prime
$\ell\neq p$ and a compatible system $\{\theta_n\}_{n\geq1}\subset\oQp$ of
$\ell$-power roots of $p$, that is, $\theta_n^{\ell^n}=p$ and
$\theta_n^{\ell}=\theta_{n-1}$ for every $n\geq1$, where $\theta_0=p$,
 so that $v(\theta_n) = \ell^{-n}$ and 
 $\theta_n = (\theta_N)^{\ell^{N-n}} \in \Q_p(\theta_N)$ for every $n \leq N$ and let
$$u_n=p^n\theta_n,\qquad s_N=\sum_{n=1}^{N}u_n,\qquad
\alpha=\sum_{n=1}^{\infty}u_n .$$
By our assumption, we have $s_N\in\Q_p(\theta_N)$ for every $N\in\N$.
Since $v(u_n)=n+\ell^{-n}$ is strictly increasing and tends to infinity, 
the series converges in $\C_p$, that is $\alpha\in\C_p$. 
Then $\alpha$ is transcendental over $\Q_p$, the valuation domain $\Z_{p,\alpha}$ 
is a residually algebraic torsion extension of $\Z_p$ which is not
discrete, and $\Q_p(s_N)=\Q_p(\theta_N)$ for every $N$, so that $e_{s_N}=\ell^{N}$  are unbounded.
\end{example}

\begin{proof}
We have $v(\alpha-s_N)=v(u_{N+1})=N+1+\ell^{-(N+1)}$.
We claim that the value group $\Gamma=v(\Q_p(\alpha)^{\times})$ of $\Z_{p,\alpha}$ is
not discrete. Note first that this forces $\alpha$ to be transcendental over $\Q_p$,
because a finite extension of $\Q_p$  has a discrete value group;
in particular $\Z_{p,\alpha}$ is defined, and it is then a non-discrete torsion extension of $\Z_p$,
and it is also residually algebraic over $\Z_p$ by Remark \ref{rem:Valpha}.

 As $\ell \neq p$, we have $\tau(\theta_n) = \zeta\theta_n$ for some $\ell$-power root of unity $\zeta \neq 1$
whenever $\tau$ is a $\Q_p$-embedding of $\Q_p(\theta_N)$
into $\oQp$ with $\tau(\theta_n)\neq\theta_n$ (indeed
$\theta_n=(\theta_N)^{\ell^{N-n}}$ lies in $\Q_p(\theta_N)$, so that
$\tau(\theta_n)$ is defined; and
$\tau(\theta_n)^{\ell^{n}}=\tau(p)=p=\theta_n^{\ell^{n}}$, whence
$\zeta^{\ell^{n}}=1$), and
$v(1- \zeta) = 0$ by Section \ref{subsec:roots}; hence, for every $n\leq N$,
$v(\theta_n - \tau(\theta_n)) = v(\theta_n) = \ell^{-n}$.
Let $\tau\neq\mathrm{id}$ be such an embedding, so  $\tau(\theta_N)\neq\theta_N$. 
In particular, the set $\{n\leq N\mid\tau(\theta_n)\neq\theta_n\}$ is non-empty;
the values $n+\ell^{-n}$ are pairwise distinct, so we obtain
\begin{equation}\label{eq:ex51}
v(s_N-\tau(s_N))=\min\{n+\ell^{-n}\mid n\leq N, \tau(\theta_n)\neq\theta_n\}\leq N+\ell^{-N} .
\end{equation}
Recall that $\Q_p(s_N)\subseteq  \Q_p(\theta_N)$.
In particular $\tau(s_N)\neq s_N$, so that $\Q_p(\theta_N)$ admits no non-trivial $\Q_p(s_N)$-embedding
and therefore $\Q_p(s_N)=\Q_p(\theta_N)$. As $\Q_p(\theta_N)\mid \Q_p$ is 
totally ramified of degree $\ell^{N}$, we get $e_{s_N}=\ell^{N}$.

Let now $m_N\in \Q_p[X]$ be the minimal polynomial of $s_N$ and let $\sigma$
run over the distinct $\Q_p$-embeddings of $\Q_p(s_N)=\Q_p(\theta_N)$ into $\oQp$, so that
$v(m_N(\alpha))=\sum_{\sigma}v(\alpha-\sigma(s_N))$. For $\sigma\neq\mathrm{id}$,
\eqref{eq:ex51} gives $v(s_N-\sigma(s_N))\leq N+\ell^{-N}<v(\alpha-s_N)$,
so that $v(\alpha-\sigma(s_N))=v(s_N-\sigma(s_N))$, a value of the form
$n+\ell^{-n}$ with $n\leq N$ and therefore lying in $\ell^{-N}\Z$. Consequently
$$v(m_N(\alpha))=\Big(N+1+\frac{1}{\ell^{N+1}}\Big)+\gamma_N \quad\text{with}\quad
\gamma_N\in \ell^{-N}\Z,$$
an element of $\Gamma$ whose denominator is exactly $\ell^{N+1}$. As $N\in\N$ is
arbitrary, $\Gamma$ is not contained in $\ell^{-N}\Z$ for any $N$, hence it is
not cyclic; that is, $\Z_{p,\alpha}$ is not discrete, as claimed.
\end{proof}

Example \ref{ex:nondiscrete} settles the first of the two failures but not the
second. The valuation domain it produces is a $V_{\alpha}$, and by Remark \ref{rem:Valpha}
it is also a residually algebraic torsion extension of $V$; it therefore lies in both
classes and does not separate them. What is needed is a residually algebraic torsion
extension of $V$ which is not of the form $V_{\alpha}$, and the next example produces one.

\begin{example}\label{prop:notValpha}
Let $p$ be a prime.
There exists a torsion extension $W$ of
$\Z_p$ to $\Q_p(X)$ which is residually algebraic over $\Z_p$ and which is not of the form
$\Z_{p,\alpha}$ for any $\alpha\in \C_p$ transcendental over $\Q_p$. Such a $W$ is not
a DVR; in particular, the hypothesis that $W$ be a DVR cannot be omitted in
Theorem \ref{thm:main}.
\end{example}

\begin{proof}
The field $\mathbb{C}_p$ is not spherically complete
\cite[$\S$ 3.4]{Rob}, so that there is a nested sequence of balls
$B_n=\{x\in\mathbb{C}_p\mid v(x-c_n)\geq\delta_n\}$, with $\{\delta_n\}\subset\Q$ strictly increasing,
whose intersection is empty. Then the sequence $\{c_n\}$ is pseudo-convergent with
$v(c_{n+1}-c_n)=\delta_n$, and its pseudo-limits are exactly the points of $\bigcap_nB_n$;
hence it has none in $\mathbb{C}_p$. Note that $\{\delta_n\}$ is necessarily bounded: were it
unbounded, $\{c_n\}$ would be a Cauchy sequence and its limit in the complete field
$\mathbb{C}_p$ would lie in every $B_n$. By density, we can choose
 elements $s_n\in\oQp$ with $v(s_n-c_n)>\delta_n$: then $E=\{s_n\}$ 
 is a pseudo-convergent sequence of $\oQp$ with $v(s_{n+1}-s_n)=\delta_n$, 
 and $E$ has the same pseudo-limits as $\{c_n\}$, hence none in $\mathbb{C}_p$.

Since $\oQp$ is algebraically closed, a pseudo-convergent sequence of algebraic
type over $\oQp$ admits a pseudo-limit in $\oQp$ \cite[Theorem 3]{Kap}; therefore $E$ is of
transcendental type over $\oQp$ and
$$\overline W=\oZp_E=\{\phi\in \oQp(X)\mid \phi(s_n)\in \oZp 
\text{ for all sufficiently large } n\}$$
is a valuation domain of $\oQp(X)$, an immediate extension of $\oZp$, with
$\overline w(X-s_n)=\delta_n$ for every $n$ (\cite[Theorem 2]{Kap}). The set
$\Lambda=\{\overline w(X-a)\mid a\in\oQp\}$ is bounded: for $a\in\oQp$ one has
$\overline w(X-a)=v(a-s_n)$ for all large $n$, a value which is eventually constant 
and smaller than $\delta_n$, since $a$ is not a pseudo-limit of $E$; hence $\Lambda$ is bounded above by $\sup_n\delta_n<\infty$. 
This is precisely the configuration discussed in Remark
\ref{rem:IZ}. Set $W=\overline W\cap \Q_p(X)$. Then $W\cap \Q_p=\Z_p$ and, 
the extension $\overline W\mid\oZp$ being immediate, $\Gamma_w\subseteq\Gamma_{\overline w}=\Gamma_{\overline v}=\Q$ 
and $k_w\subseteq k_{\overline w}=\ok=\overline{\mathbb{F}_p}$;
hence $W$ is a residually algebraic torsion extension of $V$.

Suppose that $W=\Z_{p,\alpha}$ for some $\alpha\in\mathbb{C}_p$ transcendental over $\Q_p$. Then $\alpha\notin\oQp$,
so that by
Remark \ref{rem:conj} the valuation domain $\overline{\Z_{p,\alpha}}$ of $\oQp(X)$ is
defined and restricts to $\Z_{p,\alpha}=W$ on $\Q_p(X)$.
 Since $\mathbb{Q}_p$ is complete, $\overline{\mathbb{Q}_p} \mid \mathbb{Q}_p$ is algebraic,
hence so is $\overline{\mathbb{Q}_p}(X) \mid \mathbb{Q}_p(X)$, and the latter is
quasi-Galois (indeed Galois here, $\operatorname{char} \mathbb{Q}_p = 0$; only the
transitivity (G1), which needs no separability, is used). Therefore
$\overline W$ and $\overline{\Z_{p,\alpha}}$, being two extensions of $W$ to
it, are conjugate: $\overline W=\sigma(\overline{\Z_{p,\alpha}})$ for some
$\sigma\in\Aut(\oQp(X)\mid \Q_p(X))$, by (G1) of Section \ref{sec:notation}. By
\eqref{eq:conj}, $\overline W=\overline{\Z_{p,\sigma(\alpha)}}$. Writing $\beta=\sigma(\alpha)$, we obtain
$$\delta_n=\overline w(X-s_n)=v(\beta-s_n)\qquad \text{for every } n\in\N,$$
that is, $\beta$ is a pseudo-limit of $E$ in $\mathbb{C}_p$, against the choice of $E$. 
Therefore $W\neq \Z_{p,\alpha}$ for every $\alpha$, and $W$ is not discrete by Theorem \ref{thm:main}.
\end{proof}

\begin{corollary}\label{cor:notemb}
Let $W$ be the extension constructed in Example \ref{prop:notValpha} and let
$\widehat{\Q_p(X)}$ be the completion of $\Q_p(X)$ with respect to $W$. Then $\widehat{\Q_p(X)}$
admits no isometric $\Q_p$-embedding into  $\C_p$; equivalently, it is not isomorphic, as a valued
field over  $\Q_p$, to any complete field $L$ with  $\Q_p\subseteq L\subseteq\C_p$.
\end{corollary}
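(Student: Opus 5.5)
We argue by contradiction: assuming an isometric $\Q_p$-embedding $\iota\colon\widehat{\Q_p(X)}\to\C_p$, we will show that $W=\Z_{p,\alpha}$ for $\alpha:=\iota(X)$, which Example \ref{prop:notValpha} forbids.

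First we settle the equivalence of the two formulations. Suppose $\widehat{\Q_p(X)}$ is isomorphic, as a valued field over $\Q_p$, to a complete $L$ with $\Q_p\subseteq L\subseteq\C_p$. Composing with the inclusion $L\subseteq\C_p$ gives an isometric $\Q_p$-embedding into $\C_p$; the inclusion is isometric because the valuation of $\C_p$ restricts on $L$ to its given valuation. Conversely, an isometric embedding $\iota$ has image $L=\iota(\widehat{\Q_p(X)})$. This image is complete, being isometric to a complete field, and it contains $\Q_p$. So it suffices to show that no such $\iota$ exists.

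Suppose $\iota$ exists and put $\alpha=\iota(X)\in\C_p$. Since $\iota$ is a field embedding fixing $\Q_p$ and $X$ is transcendental over $\Q_p$, the element $\alpha$ is transcendental over $\Q_p$, so $\Z_{p,\alpha}$ is defined. For every $\phi\in\Q_p(X)$ we have $\iota(\phi)=\phi(\alpha)$, because $\iota$ is a $\Q_p$-homomorphism. Isometry then gives $w(\phi)=v(\iota(\phi))=v(\phi(\alpha))$ for all $\phi\in\Q_p(X)$, where $w$ is the valuation of $W$ extended to its completion. Hence
$$W=\{\phi\in\Q_p(X)\mid v(\phi(\alpha))\geq0\}=\Z_{p,\alpha},$$
which contradicts Example \ref{prop:notValpha}.

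The only delicate point is the valuation normalisation: "isometric" must mean compatible with the given valuation $w$ extending $v$ on $\Q_p$. With any other reading, $w$ and $v\circ\iota$ could differ by a scalar. They agree on $\Q_p$, however, so the scalar is $1$. Therefore the argument goes through with no further input.
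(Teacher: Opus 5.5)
Your proposal is correct and follows essentially the paper's own proof. Like the paper, you put $\alpha=\iota(X)$, get transcendence from injectivity and $W=\Z_{p,\alpha}$ from isometry, which contradicts Example \ref{prop:notValpha}, and you obtain the equivalence by composing with the inclusion $L\subseteq\C_p$. You also spell out the other direction of the equivalence (the image of $\iota$ is a complete field between $\Q_p$ and $\C_p$), which the paper leaves implicit.
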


\begin{proof}
Suppose  $\iota\colon\widehat{\Q_p(X)}\to\C_p$ were an isometric $\Q_p$-embedding and put $x=\iota(X)$. 
Then $x$ is transcendental over $\Q_p$, $\iota$ being injective, and
$w(\phi)=v(\iota(\phi))=v(\phi(x))$ for every $\phi\in \Q_p(X)$; hence $W=V_x$, against Example
\ref{prop:notValpha}. Conversely an isomorphism onto a complete field $L$ with $\Q_p\subseteq
L\subseteq\C_p$, followed by the inclusion of $L$ in  $\C_p$, would be such an embedding.
\end{proof}

\begin{remark}\label{rem:IZ}
{\em Corollary \ref{cor:notemb} contradicts \cite[Proposition 1]{IZ95}, 
which asserts that the completion of $\Q_p(X)$ with respect to \emph{any} 
residually algebraic torsion extension of $v$ is isomorphic to a complete field between $\Q_p$ and $\C_p$ --- which 
in Example \ref{prop:notValpha} is the same as between $\Q_p$ and  $\C_p$, the field $\Q_p$ being complete. 
The isomorphism there is  topological, as the proof of  \cite[Proposition 1]{IZ95} makes explicit, 
so that the two statements do bear on the same thing. The extension of Example \ref{prop:notValpha} 
is torsion and residually algebraic, so the proposition does apply to it. 
The proof given there passes from a sequence $\{(a_n,\delta_n)\}$ of minimal pairs defining $w$ to the element
$x=\lim_na_n$ of $\C_p$, and so needs $\{a_n\}$ to be Cauchy, that is, the $\delta_n$ to be
unbounded; what the construction of \cite{APZAll} provides is only that they increase without
attaining a limit in $\Gamma_{\overline v}\cong\Q$, which leaves them possibly bounded. Under
the additional hypothesis that they be unbounded the conclusion is correct, 
and by the proof of Theorem \ref{thm:main} discreteness of $W$ is one way of securing it; \cite[Theorem 3]{IZ95}, 
whose sequence satisfies $v(a_{n+1}-a_n)\geq n$, is unaffected. 
No standing hypothesis of \cite{IZ95} excludes Example \ref{prop:notValpha}: 
the field $\Q_p$ is perfect, carries a valuation of rank one, and has $\oK$ denumerably generated over $\K$.} \eoe
\end{remark}

It remains to characterize the $\alpha\in\bK$ for which $V_{\alpha}$ is a DVR.
To this end, we consider the elements of $\bK$ of bounded ramification, namely:
$$\bK^{\br}=\{\alpha\in\bK\mid e_{\alpha}<\infty\}.$$
Clearly, $\oK\subseteq\bK^{\br}$, by the remarks in Section \ref{sec:eps}.
However, this containment is strict, by Proposition \ref{prop:nonvacuous}, and,
by Example \ref{ex:nondiscrete}, some condition is needed.
 The following proposition shows that the elements of $\bK^{\br}$ which are transcendental
 over $K$ are precisely those for which $V_{\alpha}$ is a DVR and what governs discreteness is
the ramification of the algebraic elements approximating $\alpha$.
It also settles a point left open by Example \ref{ex:nondiscrete}:
discreteness is characterized there by the existence of \emph{some} approximating sequence with bounded ramification indices,
so that, as $V_{\alpha}$ is not a DVR in that example, no such sequence exists for that $\alpha$
at all, and the unboundedness of $e_{s_N}=\ell^N$ computed there is not an accident of the sequence chosen.
We mention that for $V=\Z_{(p)}$ the set of $\alpha\in\C_p$ for which $\Z_{(p),\alpha}$ is a DVR is denoted
by $\C_p^{\text{br}}$ in \cite[Remark 2.18 \& Corollary 2.28]{p25}. The following proposition generalizes
\cite[Proposition 2.20]{p25}.

\begin{proposition}\label{prop:criterion}
Let $V$ be a DVR with perfect residue field and let $\alpha\in\bK$ be transcendental over $K$.
The following { conditions } are equivalent.
\begin{enumerate}
\item $\alpha\in\bK^{\br}$.
\item $V_{\alpha}$ is a DVR.
\item There exists a Cauchy sequence $\{s_n\}_{n\in\N}\subset\oK$ converging to $\alpha$
and with $\{e_{s_n}\}_{n\in\N}$ bounded.
\end{enumerate}
The implication $(3)\Rightarrow(2)$ holds without the hypothesis that the residue
field of $V$ be perfect.
\end{proposition}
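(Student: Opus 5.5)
I would prove the cycle $(1)\Rightarrow(2)\Rightarrow(3)\Rightarrow(1)$, together with the direct implication $(3)\Rightarrow(2)$, which is needed for the final assertion of the statement.

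For $(1)\Rightarrow(2)$, note that $K(\alpha)\subseteq\K(\alpha)$. Hence the value group $v(K(\alpha)^{\times})$ of $V_{\alpha}$ is a subgroup of $v(\K(\alpha)^{\times})$. This group contains $\Gamma_v\cong\Z$ with index $e_{\alpha}<\infty$, so it is contained in $\frac{1}{e_\alpha}\Gamma_v$. It follows that the value group of $V_\alpha$ is a nontrivial subgroup of an infinite cyclic group, hence cyclic, and $V_\alpha$ is a DVR.

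For $(2)\Rightarrow(3)$, I would apply Theorem \ref{thm:main} to $W=V_{\alpha}$, which is residually algebraic by Remark \ref{rem:Valpha}. If $\alpha\in\oK$, the constant sequence $s_n=\alpha$ works, since $e_\alpha<\infty$. If $\alpha\notin\oK$, then $[k_w:k_v]=\infty$ by assertion (2) of the theorem. Assertion (3) then says that $\alpha$ is the limit of a Cauchy sequence $\{s_n\}\subset\oK$ with $e_{s_n}=[\Gamma_w:\Gamma_v]$ for all large $n$, so the $e_{s_n}$ are bounded. The statement says "every such $\alpha$", so no appeal to Remark \ref{rem:uniqueness} is needed. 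This is the only place where perfectness enters, through Lemma \ref{lem:ram} inside Theorem \ref{thm:main}.

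For $(3)\Rightarrow(1)$, and hence also $(3)\Rightarrow(2)$ without perfectness, let $E$ bound the $e_{s_n}$ and put $N=E!$. Each $v(\K(s_n)^{\times})$ contains $\Gamma_v$ with index at most $E$, so it lies in $\frac1N\Gamma_v$. Now take any nonzero $\phi\in\K(\alpha)$, written as $g(\alpha)/h(\alpha)$ with $g,h\in\K[X]$ nonzero; since $\alpha$ is transcendental over $\K$, $g(\alpha)$ and $h(\alpha)$ are nonzero. Continuity gives $g(s_n)\to g(\alpha)\neq0$, so $v(g(\alpha))=v(g(s_n))\in\frac1N\Gamma_v$ for large $n$, and likewise for $h$. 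Therefore $v(\K(\alpha)^{\times})\subseteq\frac1N\Gamma_v$, whence $e_\alpha\le N<\infty$, which is (1). Running the same argument with $K[X]$ in place of $\K[X]$ gives (2) directly, and no step uses perfectness of the residue field.

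The only step with real content is $(2)\Rightarrow(3)$, and it is entirely supplied by Theorem \ref{thm:main}(3). The remaining point of care is the transcendence of $\alpha$ over $\K$, needed so that $g(\alpha)\ne0$ in $(3)\Rightarrow(1)$. When $\alpha\notin\oK$ this is automatic, because $\alpha$ is then transcendental over $\oK\supseteq\K$. When $\alpha\in\oK$ there is nothing to prove, since $e_\alpha<\infty$ by Section \ref{sec:eps}.
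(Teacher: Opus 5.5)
Your proof is correct and follows the paper's route: $(1)\Rightarrow(2)$ via $v(K(\alpha)^{\times})\subseteq v(\K(\alpha)^{\times})\subseteq\frac{1}{e_\alpha}\Gamma_v$, $(2)\Rightarrow(3)$ from Theorem \ref{thm:main}, and $(3)\Rightarrow(1)$ from $v(f(\alpha))=v(f(s_n))\in\frac{1}{N}\Gamma_v$ for large $n$, with no perfectness needed for $(3)\Rightarrow(2)$. The one divergence is that you cite the ``every such $\alpha$'' clause of Theorem \ref{thm:main}(3) directly and call Remark \ref{rem:uniqueness} unnecessary, but the paper's proof of that theorem only produces the sequence for the particular $\alpha$ it constructs; here the paper transports it to an arbitrary $\alpha$ with $V_{\alpha}=W$ via the $\sigma\in G$ of Remark \ref{rem:uniqueness}, which is an isometry preserving each $e_{s}$, so that appeal is exactly where the clause you quote gets its justification.
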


\begin{proof}
Let $\alpha\in\bK$ be transcendental over $K$. In this case the evaluation map $\phi(X)\mapsto\phi(\alpha)$ is 
an isomorphism from $K(X)$ to $K(\alpha)$ and $V_{\alpha}$ is by definition the pullback of $\bO\cap K(\alpha)$ 
under this evaluation map. In particular, $V_{\alpha}$ is a DVR if and only if $\bO\cap K(\alpha)$ is a DVR. 
Since $K(\alpha)\subseteq\K(\alpha)$ is an immediate extension (for example because $\K(\alpha)$ 
is contained in the completion of $K(\alpha)$), it follows that $\bO\cap K(\alpha)$ is a DVR 
if and only if $\bO\cap \K(\alpha)$ is a DVR. Thus, we have shown:
\begin{equation}\label{equivalence DVRs}
V_{\alpha}\text{ is a DVR}\iff \bO\cap\K(\alpha)\text{ is a DVR}\iff \bO\cap K(\alpha)\text{ is a DVR}
\end{equation} 

$(1)\Rightarrow(2)$ Suppose now (1) holds, that $\alpha\in \bK^{\br}$, that is, $e_{\alpha}<\infty$ 
 and $\alpha$ is transcendental over $K$. 
 This is equivalent to saying that $\bO\cap\K(\alpha)$ is a DVR, since $\Gamma_v$ has finite index in
 the value group of this valuation domain by assumption. 
 By \eqref{equivalence DVRs}, $V_{\alpha}$ is a DVR, so we have (2).

$(2)\Rightarrow(3)$. By Remark \ref{rem:Valpha} the extension $V_{\alpha}$ of $V$
is residually algebraic, and it is a DVR by hypothesis, so Theorem
\ref{thm:main} applies to $W=V_{\alpha}$. If $[k_w:k_v]<\infty$ then
$\alpha\in\oK$ by that theorem, and the constant sequence $s_n=\alpha$ satisfies
 (3), with the convention $v(0)=\infty$. Assume $[k_w:k_v]=\infty$ and put $e=[\Gamma_w:\Gamma_v]$. By assertion
(3) of Theorem \ref{thm:main} there are $\alpha'\in\bK$ with
$V_{\alpha'}=W=V_{\alpha}$ and $s_n'\in\oK$ with $\alpha'=\lim\limits_ns_n'$ and
$e(\K(s_n')\mid\K)=e$ for all large $n$. By Remark
\ref{rem:uniqueness} there is $\sigma\in G$ with $\alpha'=\sigma(\alpha)$. Put
$s_n=\sigma^{-1}(s_n')\in\oK$. Since $\sigma$ is an isometry of $\oK$ fixing $K$
pointwise, extended by continuity to $\bK$,
$$v(\alpha-s_n)=v\big(\sigma(\alpha)-s_n'\big)=v(\alpha'-s_n')\to\infty,$$
while $\sigma^{-1}$ carries $K(s_n')$ onto $K(s_n)$ preserving $v$, so that
$$e_{s_n}=e_{s_n'}\leq e$$
for all large $n$. Enlarging the bound to
accommodate the finitely many remaining indices, the sequence $\{s_n\}$ satisfies
(3).

$(3)\Rightarrow(1)$. If $\alpha\in\oK$ then clearly $\alpha\in\bK^{\br}$. 
Suppose that $\alpha\not\in\oK$, so, in particular, $\alpha$ is 
transcendental over $\K$. Let $M$ bound the $e_{s_n}$, so that
$v(\K(s_n)^{\times})=\frac{1}{e_{s_n}}\Gamma_v\subseteq\frac{1}{M!}\Gamma_v$
for
every $n$. Let $f\in \K[X]$ be non-zero. Then $f(\alpha)\neq0$, $\alpha$ being
transcendental over $\K$, and $f(s_n)\to f(\alpha)$, so that
$v(f(\alpha))=v(f(s_n))\in\frac{1}{M!}\Gamma_v$ for $n$ large. Hence the value
group $v(\K(\alpha)^{\times})$ of $V_{\alpha}$ is a subgroup of
$\frac{1}{M!}\Gamma_v$ containing $\Gamma_v$, therefore infinite cyclic, 
so $e_{\alpha}<\infty$ and $\alpha\in\bK^{\br}$.
 Perfectness of the residue field has not been used.
\end{proof}

\begin{remark}\label{que:discrete}
{\em Proposition \ref{prop:criterion} is a criterion bearing on $\alpha$ alone, but it is
expressed through ramification indices, and one may ask whether the degrees suffice: is the
discreteness of $V_{\alpha}$ detected by the degrees $[K(s):K]$ and the values
$v(\alpha-s)$, for $s\in\oK$, without reference to the ramification indices? The naive form
of this question has a negative answer. In Example \ref{ex:nonvacuous} the sequence
exhibited there has $e_{s_N}=1$, whereas there is no sequence $t_n\in\oK$ at all
with $v(\alpha-t_n)\to\infty$ and $[K(t_n):K]$ bounded, since $\K$ is locally compact and
such a sequence would force $\alpha$ to be algebraic over $\K$; so the degrees cannot simply
be substituted for the ramification indices in condition $(3)$. Example \ref{ex:nondiscrete}
gives no evidence either way: there $K(s_N)=K(\theta_N)$ is totally ramified over $K$, so
that $[K(s_N):K]=e_{s_N}=\ell^N$ and the two conditions coincide. Whether some
other condition on the function $s\mapsto\big([K(s):K],v(\alpha-s)\big)$ would do, we do not
know.} \eoe
\end{remark}

Theorem \ref{thm:main} and Proposition \ref{prop:criterion} now combine 
into a single description of the extensions of $V$ to $K(X)$ which are DVRs and residually algebraic over $V$; 
this is the form in which the two results are most conveniently used, 
and the one announced in the introduction. Before stating it we record how
large the class so described is. Both cases of Theorem \ref{thm:main} occur, and one can say
exactly when each of them does: the case $\alpha\in\oK$ arises precisely when $K$ is not
complete, by Proposition \ref{rem:byproduct}, and the case $\alpha\notin\oK$ precisely when
$[\ok:k_v]=\infty$, by Proposition \ref{prop:nonvacuous}. In particular the class is empty
altogether when $K$ is complete and $k_v$ is algebraically closed or real closed, by Corollary
\ref{cor:vacuous}.

\begin{corollary}\label{cor:description}
Let $V$ be a DVR with perfect residue field. The extensions of $V$ to $K(X)$ which
are DVRs and residually algebraic over $V$ are exactly the valuation domains
$V_{\alpha}$, where
$\alpha$ runs over the elements of $\bK$ transcendental over $K$ for which there
is a sequence $\{s_n\}\subset\oK$ with $v(\alpha-s_n)\to\infty$ and
$\{e_{s_n}\}$ bounded; and
$\alpha\in\oK$ precisely when the residue field extension is finite.
\end{corollary}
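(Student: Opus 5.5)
The plan is to assemble the corollary from Theorem \ref{thm:main}, Remark \ref{rem:Valpha} and Proposition \ref{prop:criterion}, checking the two inclusions separately and then the final clause.

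\emph{Every such extension is of the stated form.} Let $W$ be an extension of $V$ to $K(X)$ which is a DVR and residually algebraic over $V$. By Theorem \ref{thm:main}(1) there is $\alpha\in\bK$, transcendental over $K$, with $W=V_{\alpha}$. Since $V_{\alpha}=W$ is a DVR, the implication $(2)\Rightarrow(3)$ of Proposition \ref{prop:criterion} gives a Cauchy sequence $\{s_n\}\subset\oK$ converging to $\alpha$, that is, with $v(\alpha-s_n)\to\infty$, and with $\{e_{s_n}\}$ bounded. Perfectness of $k_v$ is used here, through that implication and hence through Theorem \ref{thm:main}.

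\emph{Every such $V_\alpha$ qualifies.} Let $\alpha\in\bK$ be transcendental over $K$ and admit such a sequence. I would first observe that $v(\alpha-s_n)\to\infty$ is the same as $s_n\to\alpha$ in $\bK$, so the sequence is Cauchy and converging to $\alpha$. If $\alpha\in\oK$, the constant sequence can be used instead, as in the proof of Proposition \ref{prop:criterion}. Then $(3)\Rightarrow(2)$ of Proposition \ref{prop:criterion} shows that $V_{\alpha}$ is a DVR. Remark \ref{rem:Valpha} shows that $V_{\alpha}$ extends $V$ and is residually algebraic over it. Hence $V_{\alpha}$ belongs to the class.

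\emph{The final clause.} This is exactly Theorem \ref{thm:main}(2): for every $\alpha$ with $W=V_{\alpha}$, one has $\alpha\in\oK$ if and only if $[k_w:k_v]<\infty$. By Remark \ref{rem:uniqueness}, the condition does not depend on the choice of representative $\alpha$.

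I expect no real obstacle, since everything is already proved. The only care needed is matching the phrasing: ``$v(\alpha-s_n)\to\infty$'' in the corollary against ``Cauchy sequence converging to $\alpha$'' in the proposition, together with the convention $v(0)=\infty$ when $s_n=\alpha$.
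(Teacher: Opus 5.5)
Your proposal is correct and matches the paper's proof, which simply combines Theorem \ref{thm:main}, Remark \ref{rem:Valpha} and Proposition \ref{prop:criterion} in the same way. The detours through the constant sequence when $\alpha\in\oK$ and through Remark \ref{rem:uniqueness} are harmless but unnecessary: the given sequence already satisfies condition (3), and Theorem \ref{thm:main}(2) is already stated for every representative $\alpha$.
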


\begin{proof}
Combine Theorem \ref{thm:main}, Remark \ref{rem:Valpha} and
Proposition \ref{prop:criterion}.
\end{proof}

We prove now that the set $\bK^{\br}$ is actually a \emph{subfield} of $\bK$. 
For the next result, we consider $\K^{\ur}$, the maximal unramified extension of $\K$ in $\oK$ 
(which is the compositum of all the unramified extensions of $\K$ in $\oK$) and $\bK^{\ur}$ 
its completion, which is a subfield of $\bK$. That is,
$\bK^{\ur}=\widehat{\K^{\ur}}$; the completion of the maximal unramified
extension of a finite extension $F$ of $\K$ is written $\widehat{F^{\ur}}$, as
in Theorem \ref{thm:union}.  Note that the value groups of $\K,\K^{\ur}$ and $\bK^{\ur}$ 
are the same and they are equal to $\Gamma_v$ (which is isomorphic to $\Z$).

\begin{theorem}\label{bounded ramification subfield}
Let $V$ be a DVR with perfect residue field. Then 
the set $\bK^{\br}$ is equal to the algebraic closure of $\bK^{\ur}$ in $\bK$. 
More precisely, for any $\alpha\in\bK^{\br}$, the extension $\bK^{\ur}\subseteq\bK^{\ur}(\alpha)$ is totally ramified.  
In particular, $\bK^{\br}$ is a field. 
\end{theorem}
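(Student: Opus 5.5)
The plan is to prove the two inclusions between $\bK^{\br}$ and the algebraic closure of $L:=\bK^{\ur}$ in $\bK$ separately, and to get total ramification from the residue fields. The field $L$ is complete, has value group $\Gamma_v\cong\Z$, and has residue field $\ok$: its residue field is that of $\K^{\ur}$, which is $\ok$ because $k_v$ is perfect. Once the equality is established, $\bK^{\br}$ is the relative algebraic closure of a subfield of $\bK$, hence a field.

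\emph{Algebraic over $L$ implies bounded ramification.} Let $\alpha\in\bK$ be algebraic over $L$.
\begin{itemize}
\item $L(\alpha)$ is a finite extension of the complete discretely valued field $L$, so its value group contains $\Gamma_v$ with finite index.
\item Since $\K(\alpha)\subseteq L(\alpha)$, the group $v(\K(\alpha)^\times)$ lies between $\Gamma_v$ and a finite extension of it.
\item Hence $e_\alpha<\infty$, that is, $\alpha\in\bK^{\br}$.
\end{itemize}

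\emph{Bounded ramification implies algebraic over $L$.} Let $\alpha\in\bK^{\br}$. If $\alpha\in\oK$ there is nothing to prove, so assume $\alpha\notin\oK$. By Proposition \ref{prop:criterion}, (1)$\Rightarrow$(3), there are $s_n\in\oK$ with $s_n\to\alpha$ and $e_{s_n}\le M$ for all $n$. I will show that the value group of $L(\alpha)$ lies in $\frac1{M!}\Gamma_v$.
\begin{itemize}
\item Every element of $L(\alpha)$ is a limit of elements of $\K^{\ur}(\alpha)$, so it suffices to bound values $v(f(\alpha))$ with $f\in F[X]$ and $F$ a finite unramified extension of $\K$.
\item For such $f$ and large $n$ we have $v(f(\alpha))=v(f(s_n))\in v(F(s_n)^\times)$.
\item Apply Lemma \ref{lem:ram} with $F_1=\K(s_n)$ and $F_2=F$; its separability hypothesis holds because $k_v$ is perfect. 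This gives $e(F(s_n)\mid F)\le e_{s_n}\le M$.
\item Since $e(F\mid\K)=1$, we get $v(F(s_n)^\times)\subseteq\frac1{M!}\Gamma_v$.
\end{itemize}
This is the same argument as in the proof of (3)$\Rightarrow$(1) of Proposition \ref{prop:criterion}, carried out over $\K^{\ur}$ instead of $\K$.

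Now let $C$ be the closure of $L(\alpha)$ in $\bK$. Then $C$ is complete, its value group equals that of $L(\alpha)$ and so is discrete, containing $\Gamma_v$ with some finite index $e'$, and its residue field is squeezed between $\ok$ and $\ok$, hence equals $\ok$. Apply the successive-approximation argument from the first case of the proof of Theorem \ref{thm:main}, with the complete field $L$ in place of $\K$: lifting a uniformizer of $C$ and using residue degree $1$ gives $[C:L]=e'<\infty$. Therefore $\alpha\in C$ is algebraic over $L$. Moreover $C=L(\alpha)$, since a finite extension of $L$ is complete, and $L(\alpha)\mid L$ has residue degree $1$. So $L\subseteq L(\alpha)$ is totally ramified, which is the ``more precisely'' claim; for $\alpha\in\oK$ the same residue field comparison gives total ramification.

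I expect the main obstacle to be the bound on the value group of $L(\alpha)$ rather than of $\K(\alpha)$. Passing to the unramified compositum could in principle increase the ramification of the $s_n$, and it is exactly Lemma \ref{lem:ram}, with perfectness of $k_v$, that rules this out. A secondary point to verify carefully is that the finiteness argument $[C:L]=ef$ applies over the complete base $L$, which is not a finite extension of $\K$.
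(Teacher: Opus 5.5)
Your proof is correct. The easy inclusion and the residue-field argument for total ramification are the same as the paper's. For the inclusion of $\bK^{\br}$ in the algebraic closure of $\bK^{\ur}$, however, you take a genuinely different route.

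The paper uses no approximating sequence. It puts $F_{\alpha}=\widehat{\K(\alpha)}\cap\oK$ and quotes \cite[Theorem 1]{IZ95} to get $\widehat{F_{\alpha}}=\widehat{\K(\alpha)}$, a complete field with discrete value group. It then observes, by perfectness and Hensel's lemma, that $F_{\alpha}$ and $F_{\alpha}\cap\K^{\ur}$ have the same residue field. Finally it concludes by \cite[Lemma 2.4]{PerTransc} that $\widehat{F_{\alpha}}$ is finite over $\widehat{F_{\alpha}\cap\K^{\ur}}\subseteq\bK^{\ur}$.

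You instead bound the value group of $\bK^{\ur}(\alpha)$ directly. You transport the argument of $(3)\Rightarrow(1)$ in Proposition \ref{prop:criterion} from $\K$ to $\K^{\ur}$, and then run the $[C:L]=e'f$ argument over the complete discretely valued field $L=\bK^{\ur}$. For the transport, Lemma \ref{lem:ram} is more than you need: Step 2 of its proof already shows that $F(s_n)\mid\K(s_n)$ is unramified, so $e(F(s_n)\mid\K)=e_{s_n}$.

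What each route buys:
\begin{itemize}
\item The paper's argument is short and independent of Theorem \ref{thm:main}. But it rests on the density theorem of \cite{IZ95}, whose standing hypotheses as listed in Remark \ref{rem:IZ} (perfect base field, $\oK$ denumerably generated over $\K$) are not automatic in the generality of the statement; for instance, $\K=\mathbb{F}_p((t))$ is imperfect.
\item Your argument uses only results proved in the paper. The price is that it passes through Theorem \ref{thm:main}, via $(2)\Rightarrow(3)$ of Proposition \ref{prop:criterion}.
\item Your argument also gives a quantitative refinement for free when $\alpha\notin\oK$. A generator of the value group of $C$ already lies in $v(F(s_n)^{\times})$ for all large $n$, so $[\bK^{\ur}(\alpha):\bK^{\ur}]\leq e_{s_n}$. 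Take the sequence of Theorem \ref{thm:main}(3), for which $e_{s_n}=[\Gamma_w:\Gamma_v]=e_{\alpha}$ eventually. Since $\K(\alpha)\subseteq\bK^{\ur}(\alpha)$ also gives $e_{\alpha}\mid[\bK^{\ur}(\alpha):\bK^{\ur}]$, you get $[\bK^{\ur}(\alpha):\bK^{\ur}]=e_{\alpha}$.
\end{itemize}
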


\begin{proof}
Let $\alpha\in\bK$ be algebraic over $\bK^{\ur}$. 
Then, the extension $\bK^{\ur}\subseteq \bK^{\ur}(\alpha)$ is  finite, so in particular, 
the value group of $\bK^{\ur}(\alpha)$ is discrete. Since $\K(\alpha)\subseteq \bK^{\ur}(\alpha)$ 
it follows that the value group of $\K(\alpha)$ is discrete, that is, $e_{\alpha}<\infty$. 
By definition,  $\alpha\in\bK^{\br}.$

Conversely, let $\alpha\in\bK^{\br}$. We consider the field $F_{\alpha}=\widehat{\K(\alpha)}\cap\oK$ 
and its subfield $F_{\alpha}\cap \K^{\ur}$, which is the maximal unramified extension of $\K$ in $F_{\alpha}$. 
In particular, since $k_v$ is perfect, the residue fields of $F_\alpha$ and of
$F_\alpha \cap \widehat{K}^{\,\mathrm{ur}}$ are the same: every element of the
former is separable over $k_v$ and lifts, by Hensel's lemma as in Step~1 of
Lemma~\ref{lem:ram}, to an unramified subextension of
$F_\alpha \mid \widehat{K}$; see also \cite[Ch.~II, \S7]{Neu} and
\cite[Ch.~III, \S5]{Ser}. By \cite[Theorem 1]{IZ95}, $\widehat{F_{\alpha}}=\widehat{\K(\alpha)}$, 
and the value group of the right-hand side of the last equality is discrete  by assumption. 
Moreover, since, by the previous paragraph, $\widehat{F_{\alpha}}$ and 
$\widehat{F_{\alpha}\cap \K^{\ur}}$ have the same residue field, 
and the respective valuation domains are complete DVRs, by \cite[Lemma 2.4]{PerTransc}, 
the extension $\widehat{F_{\alpha}\cap \K^{\ur}}\subseteq \widehat{F_{\alpha}}$ is finite. 
In particular, $\alpha$ is algebraic over $\widehat{F_{\alpha}\cap \K^{\ur}}$, 
so, a fortiori, it is algebraic over $\bK^{\ur}$, as claimed.

It is well-known that the residue field of $\K^{\ur}$ is equal to  $k_v^{\mathrm{sep}}$ 
(see, for example, Step 1 of the proof of Lemma \ref{lem:ram}), 
hence to $\overline{k_v}$, an algebraic closure of the residue field of $\K$, the field $k_v$ being perfect. 
In particular, the residue field of $\bK^{\ur}$ is equal to $\overline{k_v}$. 
Since we have just shown that for every $\alpha\in\bK^{\br}$ the extension $\bK^{\ur}\subseteq\bK^{\ur}(\alpha)$ is finite 
and the residue field of the bottom field is the algebraically closed field $\overline{k_v}$, 
it follows that $\bK^{\ur}(\alpha)$ has residue field equal to $\overline{k_v}$. 
Thus, the extension $\bK^{\ur}\subseteq\bK^{\ur}(\alpha)$ is totally ramified.

The last claim is now straightforward.
\end{proof}

The following is an alternative and useful characterization of the field $\bK^{\br}$, 
viewed as the union of a directed family of the completions of 
the maximal unramified extensions of all the finite extensions of $\K$. As before, given a finite extension $F$ of $\K$, 
we denote by $F^{\ur}$ the maximal unramified extension of $F$ in $\oK$.

\begin{theorem}\label{thm:union}
Let $V$ be a DVR with perfect residue field.  Let $\mathcal{K}$ be the family of all finite extensions of $\K$. Then
$$\bK^{\br}=\bigcup_{F\in\mathcal{K}}\widehat{F^{\ur}} .$$
\end{theorem}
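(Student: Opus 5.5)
We prove the two inclusions separately, working throughout inside $\bK$ with the unique extension of $v$.

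\emph{The inclusion $\supseteq$.} Let $F$ be a finite extension of $\K$ inside $\oK$, and let $\alpha\in\widehat{F^{\ur}}$. The value group of $F^{\ur}$ equals that of $F$, which contains $\Gamma_v$ with index $e(F\mid\K)<\infty$. Completion does not change the value group, so $v(\widehat{F^{\ur}}{}^{\times})=v(F^{\times})$. Since $\K(\alpha)\subseteq\widehat{F^{\ur}}$, we get $e_\alpha\leq e(F\mid\K)<\infty$. Hence $\alpha\in\bK^{\br}$.

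\emph{The inclusion $\subseteq$.} Let $\alpha\in\bK^{\br}$. We may assume $\alpha\notin\oK$, since otherwise we take $F=\K(\alpha)$.

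\begin{enumerate}
\item By Theorem \ref{bounded ramification subfield}, $\alpha$ is algebraic over $\bK^{\ur}$. Let $g\in\bK^{\ur}[X]$ be its minimal polynomial, with coefficients $c_0,\dots,c_d$.
\item Since $\K^{\ur}$ is dense in $\bK^{\ur}$, approximate each $c_i$ by some $c_i'\in\K^{\ur}$.
\item Each $c_i'$ lies in a finite unramified extension of $\K$. Let $F_0$ be the compositum of these extensions; it is a finite extension of $\K$ with $F_0^{\ur}=\K^{\ur}$.
\item By Krasner's lemma, or more safely by continuity of roots, the polynomial $g'=\sum c_i'X^i$ has a root $\beta\in\oK$ close to $\alpha$. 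Put $F=F_0(\beta)$, a finite extension of $\K$.
\end{enumerate}

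It remains to show that $\alpha\in\widehat{F^{\ur}}$. Note that $\widehat{F^{\ur}}$ contains $\bK^{\ur}$ and $\beta$. It also contains the field $\bK^{\ur}(\beta)$, which is finite over $\bK^{\ur}$ and hence complete. The plan is to apply Krasner's lemma over the complete field $\bK^{\ur}(\beta)$. If $v(\alpha-\beta)$ exceeds $v(\alpha-\alpha')$ for every other root $\alpha'$ of $g$, then $\alpha\in\bK^{\ur}(\beta)$. This requires $g$ to be separable.

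\textbf{Main obstacle: separability of $g$ in characteristic $p$.} If $g$ is inseparable, we replace $\alpha$ by $\alpha^{p^m}$, which is separable over $\bK^{\ur}$. We then recover $\alpha$ by a purely inseparable step. That step is harmless, since $\alpha$ is the unique $p^m$-th root of $\alpha^{p^m}$ in $\bK$, and it generates a finite extension of $\bK^{\ur}(\beta)$.

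To handle this cleanly, we would instead show that $\bK^{\ur}(\alpha)$ is a finite extension of $\bK^{\ur}$ of the form $\bK^{\ur}\cdot E$ for some finite $E\mid\K$ inside $\oK$. Once that is established, $\widehat{(EF_0)^{\ur}}$ contains both $\bK^{\ur}$ and $E$. Being complete, it contains their compositum $\bK^{\ur}E$, since a compositum of a complete field with a finite extension is complete. Hence it contains $\alpha$.

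\textbf{How we would first try to get such an $E$.} We would use the density of $\oK$ in $\bK^{\ur}(\alpha)$ together with Krasner's lemma applied to the separable part of $\alpha$ over $\bK^{\ur}$, and adjoin $p$-power roots for the inseparable part. The key claim we would try to establish is that every finite extension of $\bK^{\ur}$ inside $\bK$ is generated by an element of $\oK$.
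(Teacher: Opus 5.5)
Your inclusion $\supseteq$ is correct. It is more direct than the paper's route through Proposition \ref{prop:criterion}: $\widehat{F^{\ur}}$ has the value group of $F$, so $e_\alpha\leq e(F\mid\K)$. Your separable case is also correct. If the $c_i'$ are close enough to the $c_i$, the root $\beta\in\oK$ of $g'$ satisfies Krasner's condition, which gives $\alpha\in\bK^{\ur}(\beta)\subseteq\widehat{F^{\ur}}$. For this you only need $\K^{\ur}\subseteq F^{\ur}$ (base change of unramified extensions). You do not need the equality $\bK^{\ur}(\beta)=\widehat{F^{\ur}}$, nor the irreducibility of the approximating polynomial that the paper extracts from Krasner's lemma. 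In particular your argument is complete when $\operatorname{char}\K=0$.

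The gap is the inseparable case, which can occur only when $\operatorname{char}\K=p$, and does occur there. After the separable step you know $\gamma=\alpha^{p^m}\in\widehat{F_1^{\ur}}$ for some finite $F_1\mid\K$, and you must produce a finite $F$ with $\alpha\in\widehat{F^{\ur}}$.

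Your justification does not do this. That $\alpha$ is the unique $p^m$-th root of $\gamma$ and generates a finite extension of $\widehat{F_1^{\ur}}$ only restates the problem. The claim that an element algebraic over $\widehat{F_1^{\ur}}$ lies in some $\widehat{F^{\ur}}$ is the theorem itself with $F_1$ in place of $\K$, so the step is circular. Your fallback claim, that every finite extension of $\bK^{\ur}$ in $\bK$ is generated by elements of $\oK$, is true. But ``adjoin $p$-power roots for the inseparable part'' does not say which roots to adjoin, nor why roots of elements of $\oK$ suffice.

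The missing idea is structural. $F_1$ is complete of characteristic $p$ with perfect residue field, so $\widehat{F_1^{\ur}}=\overline{k_v}((\varpi))$ with $\varpi$ a uniformizer of $F_1$ (cf.\ Example \ref{ex:families}). Since $\overline{k_v}$ is perfect, $p^m$-th roots can be taken coefficientwise: $\bigl(\sum a_i\varpi^i\bigr)^{1/p^m}=\sum a_i^{1/p^m}\varpi^{i/p^m}$. Hence every element of $\widehat{F_1^{\ur}}$ has its $p^m$-th root in $\overline{k_v}((\varpi^{1/p^m}))\subseteq\widehat{F^{\ur}}$, where $F=F_1(\varpi^{1/p^m})$. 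Equivalently, $[L:L^p]=p$ for $L=\widehat{F_1^{\ur}}$, so a single $p^m$-th root of a uniformizer lying in $\oK$ accounts for the whole purely inseparable part. This is how the paper finishes. Without it, or an equivalent argument, your proof does not cover characteristic $p$.
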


\begin{proof}
Let $F \in \mathcal{K}$ and $\alpha\in\widehat{F^{\ur}}$. 
By definition, there exists a Cauchy sequence $\{s_n\}_{n\in\N}\subset F^{\ur}$ converging to $\alpha$. 
Considering the tower of extensions $\K\subseteq F\subset F^{\ur}\subset\oK$, 
we see that the ramification index $e(F^{\ur}\mid \K)$ is equal to $e(F\mid \K)$. 
Hence, $\{e_{s_n}\}_{n\in\N}$ is bounded by $e(F\mid \K)$, so by Proposition \ref{prop:criterion}, $\alpha\in\bK^{\br}$.


Conversely, let $\alpha\in \bK^{br}$. By Theorem \ref{bounded ramification subfield}, $\alpha$
is algebraic over $\bK^{ur}$. Assume first that $\alpha$ is separable over $\bK^{ur}$
and denote by $g\in \bK^{ur}[X]$ its minimal polynomial. If $f\in\widehat{K}^{ur}[X]$
is a polynomial of the same degree as $g$ and sufficiently close to $g$, then $f$
is irreducible and $\bK^{ur}(\alpha)= \bK^{ur}(\alpha')$, where $\alpha'$ is a root of
$f(X)$ (\cite[Ch.~XII, Proposition 2.5]{Lang}, a consequence of Krasner's lemma).  By
construction, $\alpha'$ is algebraic over $\widehat{K}^{ur}$, so it is in
$\overline{\widehat{K}}$.   Put $F=\widehat{K}(\alpha')\in\mathcal{K}$. Then
$\widehat{K}^{ur}(\alpha')=\widehat{K}^{ur}\cdot F=F^{ur}$: the extension
$\widehat{K}^{ur}\mid\widehat{K}$ is unramified, hence so is
$\widehat{K}^{ur}\cdot F\mid F$ by base change (Step 2 of the proof of
Lemma~\ref{lem:ram}), which gives $\widehat{K}^{ur}\cdot F\subseteq F^{ur}$; 
conversely the residue field of a finite unramified extension of $F$ is a finite
extension of $k_{v}$, separable since $k_{v}$ is perfect, so it is the residue
field of a finite unramified extension of $\widehat{K}$ by Step~1 of the same
proof, whence $F^{ur}\subseteq\widehat{K}^{ur}\cdot F$.
Finally, $\bK^{ur}(\alpha')$
is a finite extension of the complete field $\bK^{ur}$, hence complete, and
$F^{ur}=\widehat{K}^{ur}(\alpha')$ is dense in $\bK^{ur}(\alpha')$, because $\widehat{K}^{ur}$ is dense
in $\bK^{ur}$; therefore
$\alpha \in \bK^{ur}(\alpha) = \bK^{ur}(\alpha') = \widehat{F^{ur}}$.

In general, let $p^m$ be the inseparability degree of $\alpha$ over $\bK^{ur}$, so
that $\gamma:=\alpha^{p^m}$ is separable over $\bK^{ur}$, and $\gamma\in
\bK^{br}$, the value group of $\bK^{ur}(\gamma)$ being contained in the
discrete value group of $\bK^{ur}(\alpha)$. By the case just treated there is
$F_1\in\mathcal{K}$ with $\gamma\in\widehat{F_1^{ur}}$; we may assume $m>0$, so
that $\operatorname{char}\widehat{K}=p$. As in Example~\ref{ex:families},
$\widehat{F_1^{ur}}=\overline{k_v}((\varpi))$ for a uniformizer $\varpi$ of $F_1$,
the residue field $\overline{k_v}$ being perfect. Put
$F=F_1(\varpi^{1/p^m})\in\mathcal{K}$. Then
$\widehat{F^{ur}}=\overline{k_v}((\varpi^{1/p^m}))$ contains a $p^m$-th root of
every element of $\overline{k_v}((\varpi))$, again because $\overline{k_v}$ is
perfect; hence $\alpha=\gamma^{1/p^m}\in\widehat{F^{ur}}$.
\end{proof}

Neither Theorem \ref{bounded ramification subfield} nor Corollary
\ref{cor:description} says what $\bK^{\br}$ is in a concrete instance.  
For the two families of Example \ref{ex:families} it can be named outright, 
in characteristic zero, and Corollary \ref{cor:description} becomes a statement about Puiseux series.

\begin{example}\label{ex:puiseux}
Keep the notation of Example \ref{ex:families} and let
$\operatorname{char}F=0$.  Then $\bK^{\br}$ is the field of Puiseux series over
$\overline\kappa$:
$$\bK^{\br}=\bigcup_{N\geq1}\overline{\kappa}((z^{1/N})),\qquad\text{while}\qquad
\oK=\bigcup_{N\geq1}\bigcup_{\kappa'}\kappa'((z^{1/N})),$$
the inner union being over the finite extensions $\kappa'\mid\kappa$.
 Consequently the extensions of $V$ to $K(X)$ which are DVRs and residually
algebraic over $V$ are exactly the $V_{\alpha}$ for the Puiseux series
$$\alpha=\sum_{n\geq n_0}a_nz^{\,n/N},\qquad a_n\in\overline\kappa, \ \ N\in\N \text{ is fixed }$$
that are transcendental over $K$; the index $[\Gamma_w:\Gamma_v]$ divides $N$, 
and $[k_w:k_v]$ is finite precisely when the $a_n$'s generate a finite extension of $\kappa$. 
Example \ref{ex:CC} is the case $\kappa=\overline\kappa=\C$ of this
description and Example \ref{ex:Qps} is the case $\kappa=\Q$, $N=1$.  The
hypothesis on the characteristic cannot be dropped: in characteristic $p$, the
algebraic closure of $\overline\kappa((z))$ strictly contains the Puiseux field,
by an example of Chevalley \cite[\S2]{a56} (or see \cite[p. 64]{c51}),
and already the roots of $Y^p-Y=z^{-1}$ lie outside it.
\end{example}

\begin{proof}
The field $\bK$ is algebraically closed, being the completion of the
algebraically closed field $\oK$ of rank one, so Theorem
\ref{bounded ramification subfield} implies that $\bK^{\br}$ is an algebraic closure of
$\bK^{\ur}$, the last field being  equal to $\overline\kappa((z))$ by Example \ref{ex:families}. By the
theorem of Newton and Puiseux (\cite[Ch. IV, \S 2, Proposition 8]{Ser}),  
the algebraic closure of $\overline\kappa((z))$ is equal to  the displayed Puiseux field, 
since the residue field $\overline\kappa$ of $\overline\kappa((z))$ is  algebraically closed.   
The description of $\oK$ follows from the same theorem applied to
$\widehat K\cong\kappa((z))$, once one knows that every finite extension $E$ 
of $F=\kappa((z))$ is contained in $\kappa'((z^{1/N}))$ for a suitable finite extension 
$\kappa'\,|\,\kappa$ and a suitable $N\ge 1$; the containment cannot in general be
replaced by an equality, as the example $E=\mathbb{Q}((z))(\sqrt{2z})$, whose residue
field is $\mathbb{Q}$, already shows. To see it, put $N=e(E\,|\,F)$, 
denote by $\mathcal O_E$  the valuation ring of $E$, and let $\kappa'_0$ be the residue field of $E$, 
a finite and hence separable extension of $\kappa$.
Lifting a primitive element of $\kappa'_0$ by Hensel's lemma, as in Step 1 of the
proof of Lemma \ref{lem:ram}, we obtain a copy of $\kappa'_0$ inside $\mathcal O_E$, so that
$\kappa'_0((z))\subseteq E$ and $E\,|\,\kappa'_0((z))$ is totally ramified of degree
$N$. Let $\pi$ be a uniformizer of $E$ and write $z=u\pi^N$ with
$u\in\mathcal O_E^{\times}$; let $\bar u\in\kappa'_0$ have the same residue as $u$.
The unit $u/\bar u$ has residue $1$, and $N$ is invertible, $\kappa$ having
characteristic zero, so Hensel's lemma provides an $N$-th root of $u/\bar u$ in $E$.
Setting $\kappa'=\kappa'_0(\bar u^{1/N})$ and
$\rho=\bar u^{1/N}(u/\bar u)^{1/N}\pi\in E\kappa'$ we get $\rho^N=z$, whence
$E\subseteq E\kappa'=\kappa'((z))(\rho)\subseteq\kappa'((z^{1/N}))$. Conversely, each
$\kappa'((z^{1/N}))$ is a finite extension of $F$, of degree $[\kappa':\kappa]\,N$,
and the description of $\overline{\widehat K}$ follows.  

The description of the extensions announced in the statement now follows from Corollary \ref{cor:description},
$\mathbb{K}^{\mathrm{br}}$ having just been identified with the Puiseux field;
an $\alpha$ as displayed lies in $\overline\kappa((z^{1/N}))$, whose value group is
$\frac1N\Gamma_v$, whence the assertion on the index, and it lies in $\oK$
precisely when its coefficients lie in a single finite extension of $\kappa$,
which by Theorem \ref{thm:main}\,(2) is the condition on $[k_w:k_v]$.
\end{proof}

The one hypothesis of Corollary \ref{cor:description} that we have not been able to
dispense with is the perfectness of the residue field, and we close the paper with the
following question it raises.

\begin{question}\label{que:imperfect}
Does Theorem \ref{thm:main} hold when the residue field of $V$ is not perfect?
\end{question}

We can at least say where the hypothesis is used. In the proof of Theorem
\ref{thm:main} it serves only to guarantee the hypothesis of Lemma \ref{lem:ram}, that
is, the separability of the residue field extensions of the finite extensions
$K(s)\mid K$, $s\in\oK$; by Example \ref{rem:sharp} that hypothesis cannot be
removed from Lemma \ref{lem:ram} itself, so the present strategy of proof does not
extend, but we know of no counterexample to the statement of the theorem.
The smallest case to test is $V=\kappa_0[[t]]$ with $\kappa_0=\mathbb{F}_p(u)$, the
field of Example \ref{rem:sharp} itself.

The same question has a second side. Proposition \ref{prop:nonvacuous} uses
perfectness independently of
Lemma \ref{lem:ram}, so an affirmative answer would not remove the hypothesis by itself
 from the results of this paper; and there is more lost. For $k_v$ imperfect, the
extension $\ok\mid k_v$ is inseparable, so that the unramified extensions on which the
construction rests are no longer available and the implication $(1)\Rightarrow(2)$ of
Proposition \ref{prop:nonvacuous} breaks down. Only the implication $(3)\Rightarrow(1)$,
which uses no separability, survives, and we do not know whether over such a $V$ there is
any $W$ as in Theorem \ref{thm:main} with $[k_w:k_v]=\infty$.

\subsection*{Acknowledgments}
The first author was supported by Basic Science Research Program through the National Research Foundation of Korea (NRF)
funded by the Ministry of Education (RS-2017- NR027863). 
The second author  is a member of the National Group for Algebraic and Geometric Structures and their Applications 
(GNSAGA) of the Italian Mathematics Research Institute (INdAM).

\subsection*{Declaration of generative AI and AI-assisted technologies in the manuscript preparation process}

 During the preparation of this work the authors used Claude, an AI assistant developed by Anthropic, in order to explore different mathematical contents about the topics of this paper. After using this tool/service, the authors reviewed and edited the content as needed and take full responsibility for the content of the published article.

\end{document}